\documentclass{article}
\usepackage{amssymb,amsmath,amsfonts,amsthm}
\usepackage{latexsym}
\usepackage{mathrsfs}
\usepackage{graphics}
\usepackage{tikz}
\usetikzlibrary{shapes,arrows,calc}
\usepackage{indentfirst}

\usepackage{hyperref}

\usepackage{enumerate}
\usepackage{mathtools}
\usepackage{comment}
\usepackage{xparse}
\usepackage{bm}
\usepackage{subcaption}
\usepackage{caption}
\usepackage{graphicx} 
\usetikzlibrary{positioning} 
\hypersetup{colorlinks = true, linkcolor = blue, citecolor = blue, urlcolor = blue}
\usepackage{float}
\usepackage[section]{placeins}

\allowdisplaybreaks

\newtheorem{innercustomthm}{Theorem}
\newenvironment{customthm}[1]
  {\renewcommand\theinnercustomthm{#1}\innercustomthm}
  {\endinnercustomthm}

\newtheorem*{thm*}{Theorem}
\newtheorem{thm}{Theorem}
\newtheorem{lem}[thm]{Lemma}
\newtheorem{pro}[thm]{Proposition}
\newtheorem{obs}[thm]{Observation}

\newtheorem{ques}[thm]{Question}
\newtheorem{rem}[thm]{Remark}

\newcommand{\N}{\mathbb{N}}
\newcommand{\Z}{\mathbb{Z}}

\newcommand{\R}{\mathbb{R}}

\usepackage[margin=1in]{geometry}
\usepackage{amsmath,amssymb}
\usepackage{amsthm}

\newtheorem*{claim*}{Claim}

\usepackage{xparse} 
\usepackage{etoolbox}
\NewDocumentCommand{\angles}{mmO{1}}{%
  \ensuremath{%
    \ifcase\numexpr#3\relax
      \PackageError{angles}{Nesting level must be >= 1}{You passed #3}%
    \or
      #1\langle #2 \rangle
    \or
      #1\langle\langle #2 \rangle\rangle
    \or
      #1\langle\langle\langle #2 \rangle\rangle\rangle
    \else
      \PackageError{angles}{Unsupported nesting level #3}{Only nesting levels 1--3 are supported}
    \fi
  }%
}
\usetikzlibrary{fit,matrix,positioning,backgrounds,shapes.geometric}
\newsavebox{\KTwoFourFigureBox}

\begin{document}
\title{The DP Color Function of Bipartite Graphs}

\author{ 
Hemanshu Kaul \thanks{Department of Applied Mathematics, Illinois Institute of Technology, Chicago, IL, USA
(kaul@illinoistech.edu)
}
\and
Jeffrey A. Mudrock \thanks{Department of Mathematics and Statistics, University of South Alabama, Mobile, AL, USA (mudrock@southalabama.edu)}
\and
Gunjan Sharma  \thanks{Department of Mathematics and Computer Science, Lake Forest College, Lake Forest, IL, USA
(gsharma@lakeforest.edu)}
\and
Anne Ullyot \thanks{Department of Applied Mathematics, Illinois Institute of Technology, Chicago, IL, USA
(aullyot@hawk.illinoistech.edu)}
}
\maketitle
\begin{abstract}
    DP-coloring (or correspondence coloring) is a generalization of list coloring that has been widely studied since its introduction by Dvo\v{r}\'{a}k and Postle in 2015. As the analogue of $P(G,q)$, the chromatic polynomial of a graph $G$, the DP color function of $G$, denoted by $P_{DP}(G,q)$, counts the minimum number of DP-colorings over all  $q$-fold covers of $G$. It follows that $P_{DP}(G,q) \leq P(G,q)$.  It is known that there are graphs for which $P_{DP}(G,q) < P(G,q)$ for all sufficiently large $q$; in fact, all bipartite graphs containing a cycle have this property.  A fundamental open question about DP color functions asks whether, for every graph $G$, there exist $N \in \N$ and a polynomial $p$ such that $P_{DP}(G,q) = p(q)$ whenever $q \geq N$.  In this paper we answer this question affirmatively for all bipartite graphs. Specifically, if $G$ is an $n$-vertex bipartite graph with $c$ components, then   $P_{DP}(G,q) = (-1)^{n-c}q^c \;T_G(1-q,1)$ for all sufficiently large $q$, where $T_G(x,y)$ is the Tutte polynomial of $G$. The ideas we develop also yield an asymptotic formula for $P(G,q)-P_{DP}(G,q)$ whenever the girth of $G$ is even.

    \noindent {\bf Keywords.}  graph coloring, chromatic polynomial, Tutte polynomial, DP-coloring, correspondence coloring, DP color function

    \noindent \textbf{Mathematics Subject Classification.}  05C15, 05C30, 05C31, 05C35.

\end{abstract}

\section{Introduction}\label{intro}

In 1990, Kostochka and Sidorenko~\cite{AS90} posed a natural question that would serve as the springboard for a broader study of the asymptotic behavior of enumerative functions arising from extensions of classical graph coloring.  They asked whether the list color function $P_\ell(G,q)$, which counts the minimum number of list colorings of $G$ over all $q$-list-assignments, equals the corresponding chromatic polynomial $P(G,q)$ when $q$ is large enough.  Donner~\cite{D92} answered this question affirmatively in 1992, and subsequent work improved the known bounds on how large $q$ must be to achieve this equality \cite{DZ22, T09, WQ17}. Whether this phenomenon, namely, {\sl equality between an enumerative function arising from a variant of list coloring and its classical counterpart whenever the number of colors is sufficiently large}, occurs has also been studied in the context of colorings of signed graphs~\cite{HQW24}, packings of list colorings~\cite{KM24}, and list coloring of unlabeled graphs~\cite{KM25}.

For DP-coloring, the focus of this paper and a common generalization of many notions of graph coloring, including list coloring (see~\cite{DKM23}), this phenomenon does not occur for all graphs.  For example, it is known that for every graph $G$ with girth that is even, the DP color function $P_{DP}(G,q)$, which counts the minimum number of DP-colorings of $G$ over all $q$-fold covers, does not equal $P(G,q)$ for all sufficiently large $q$. In this paper, we show a modified form of this phenomenon holds for the DP color function in the case of bipartite graphs. Instead of comparing $P_{DP}(G,q)$ with the chromatic polynomial, we show that $P_{DP}(G,q)$ equals a multiple of $T_G(1-q,1)$ for large enough $q$, where $T_G(x,y)$ is the Tutte polynomial of $G$. This represents major progress towards the conjecture that for every graph $G$, $P_{DP}(G,q)$ equals a polynomial (not necessarily $P(G,q)$) for sufficiently large $q$.

The remainder of this section provides the necessary definitions and an overview of our results.

\subsection{Preliminaries and Graph Coloring} \label{basic}

In this paper all graphs are nonempty, finite, simple graphs unless otherwise noted.  Generally speaking we follow West~\cite{W01} for terminology and notation.  The set of natural numbers is $\N = \{1,2,3, \ldots \}$. For $q\in\N$, we write $\Z_q$ for the set of integers modulo $q$. For $m\in\N$, we write $[m]=\{1,\ldots,m\}$, and we set $[0]=\emptyset$.  For $a,b \in \Z$, we let $[a:b] =\{i\in\Z:a\leq i\leq b\}$. Thus $[a:b]=\emptyset$ whenever $a>b$. We use the conventions that an empty sum is $0$, an empty product is $1$, an empty union is $\emptyset$, and the intersection of an empty family of subsets of a fixed ambient set is that ambient set. We also take $0^0=1$.

 If $G$ is a graph and $S, U \subseteq V(G)$, we use $G[S]$ for the subgraph of $G$ induced by $S$, and we use $E_G(S, U)$ for the set consisting of all the edges in $E(G)$ such that one endpoint is in $S$ and the other is in $U$.  If an edge in $E(G)$ connects the vertices $u$ and $v$, the edge can be represented by $uv$ or $vu$.  For $A \subseteq E(G)$, the spanning subgraph of $G$ with edge set $A$ is denoted by $\angles{G}{A}$.  We write $\angles{G}{A}[2]$ for the subgraph of $\angles{G}{A}$ obtained by deleting all isolated vertices. For a graph $G$, $\operatorname{comp}(G)$ denotes the number of components of $G$.

In the classical vertex coloring problem we wish to color the vertices of a graph $G$ with up to $q$ colors from $[q]$ so that adjacent vertices receive different colors; such a coloring is called a \emph{proper $q$-coloring}. The chromatic number of a graph $G$, denoted $\chi(G)$, is the smallest $q$ such that $G$ has a proper $q$-coloring.  List coloring, a well-known variation on classical vertex coloring, was introduced independently by Vizing~\cite{V76} and Erd\H{o}s, Rubin, and Taylor~\cite{ET79} in the 1970s.  For list coloring, we associate a \emph{list assignment} $L$ with a graph $G$ such that each vertex $v \in V(G)$ is assigned a list of colors $L(v)$ (we say $L$ is a list assignment for $G$).  Then, $G$ is \emph{$L$-colorable} if there exists a proper coloring $f$ of $G$ such that $f(v) \in L(v)$ for each $v \in V(G)$ (we refer to such an $f$ as a \emph{proper $L$-coloring} of $G$).  A list assignment $L$ is called a \emph{$q$-assignment} for $G$ if $|L(v)|=q$ for each $v \in V(G)$.  The \emph{list chromatic number} of a graph $G$, denoted $\chi_\ell(G)$, is the smallest $q$ such that $G$ is $L$-colorable whenever $L$ is a $q$-assignment for $G$.  We say $G$ is \emph{$q$-choosable} if $q \geq \chi_\ell(G)$.  Since $G$ must be $L$-colorable whenever $L$ is a $\chi_\ell(G)$-assignment for $G$ that assigns the same list of colors to each element in $V(G)$, it is clear that $\chi(G) \leq \chi_\ell(G)$.  This inequality may be strict since it is known that there are bipartite graphs with arbitrarily large list chromatic number (see~\cite{ET79}).

In 2015, Dvo\v{r}\'{a}k and Postle~\cite{DP15} introduced a generalization of list coloring called DP-coloring (they called it correspondence coloring) in order to prove that every planar graph without cycles of lengths 4 to 8 is 3-choosable. DP-coloring has been studied extensively\footnote{According to MathSciNet, the paper~\cite{DP15} by Dvo\v{r}\'{a}k and Postle has over 140 citations currently.}. Intuitively, DP-coloring is a variation of list coloring in which each vertex still receives a list of colors, but the correspondence between colors at adjacent vertices may vary from edge to edge.  We now give the formal definition.  Suppose $G$ is a graph.  
A \emph{cover} of $G$ is a pair $\mathcal{H} = (L,H)$ satisfying the following conditions.
        \begin{itemize}
            \item[{\bf -}] $H$ is a graph and $L$ is a function assigning to each $v \in V(G)$ a set  $L(v)$ satisfying $L(v) \subseteq V(H)$.
            \item[{\bf -}] The sets $L(v)$ for $v \in V(G)$ are pairwise disjoint, each is an independent set in $H$, and  $V(H) = \bigcup_{v \in V(G)} L(v)$. 
            \item[{\bf -}] For all distinct $u,v\in V(G)$, $E_H(L(u),L(v))$ is a matching, and it is empty whenever $uv\notin E(G)$.
        \end{itemize}
We stress that the matchings between $L(u)$ and $L(v)$ for $uv \in E(G)$ need not be perfect (and may even be empty). For $q\in\N$, we say that $\mathcal{H}$ is a \emph{$q$-fold} cover of $G$ if $|L(v)|=q$ for every $v\in V(G)$.  A $q$-fold cover $\mathcal{H}$ is said to be \emph{full} if for each $uv \in E(G)$, the matching $E_H(L(u),L(v))$ is perfect.  For any $q$-fold cover $\mathcal{H}$ of $G$ and any $A \subseteq E(G)$, the subcover of $\mathcal{H}$ restricted to $\angles{G}{A}$ is $\angles{\mathcal{H}}{A} = (L_{\angles{}{A}},H_{\angles{}{A}})$, where $L_{\angles{}{A}}= L$, $V(H_{\angles{}{A}}) = \bigcup_{v \in V(G)}L(v)$, and  $E(H_{\angles{}{A}}) = \bigcup_{uv \in A}E_H(L(u), L(v))$. The subcover of $\mathcal{H}$ restricted to $\angles{G}{A}[2]$ is ${\angles{\mathcal{H}}{A}[2]} = (L_{\angles{}{A}[2]}, H_{\angles{}{A}[2]})$, where $L_{\angles{}{A}[2]}(u) = L(u)$ for all $u \in V(\angles{G}{A}[2])$, $V(H_{\angles{}{A}[2]}) = \bigcup_{u \in V(\angles{G}{A}[2])}L(u)$, and $E(H_{\angles{}{A}[2]}) = \bigcup_{uv \in A}E_H(L(u), L(v))$.

Suppose $G$ is a graph and $\mathcal{H}= (L,H)$ is a $q$-fold cover of $G$. A \emph{transversal} of $\mathcal{H}$ is a set of vertices $T \subseteq V(H)$ such that $|T \cap L(v)|=1$ for all $v \in V(G)$.  We let $\mathcal{T}_{\mathcal{H}}$ be the set of all transversals of $\mathcal{H}$.  A transversal is called \emph{independent} if it is an independent set in $H$. A \emph{proper $\mathcal{H}$-coloring} of $G$ is an independent transversal of $\mathcal{H}$.  We let $\mathcal{I}_{\mathcal{H}}$ be the set of all  proper $\mathcal{H}$-colorings of $G$. An \emph{anti-$\mathcal{H}$-coloring} of $G$ is a transversal $T$ of $\mathcal{H}$ such that $H[T]$ is isomorphic to $G$. We let $\mathcal{F}_{\mathcal{H}}$ be the set of all anti-$\mathcal{H}$-colorings of $G$.

Suppose $\mathcal{H} = (L,H)$ is a $q$-fold cover of $G$.  We say  $\mathcal{H}$ 
is \emph{canonical} if it admits a \emph{canonical labeling}, that is, a mapping $\lambda \colon V(H) \to [q]$ such that the following conditions are satisfied. 
\begin{itemize}
    \item[{\bf -}] For every $v \in V(G)$, the restriction of $\lambda$ to $L(v)$ is a bijection from $L(v)$ to $[q]$.
    \item[{\bf -}] For all $uv \in E(G)$, $c \in L(u)$, and $c' \in L(v)$, we have $cc' \in E(H)$ if and only if $\lambda(c) = \lambda(c')$.
\end{itemize}
Now suppose $\mathcal{H} = (L,H)$ is a canonical $q$-fold cover of $G$ with canonical labeling $\lambda$. For each $v\in V(G)$ and $j \in [q]$, let $(v,j)$ denote the unique vertex of $L(v)$ satisfying $\lambda((v,j))=j$.  With this notation, for every
$uv\in E(G)$ and $i,j\in[q]$, $(u,i)(v,j)\in E(H)$ if and only if $i =j$. Let $\mathcal{C}$ be the set of proper $q$-colorings of $G$. The function $f: \mathcal{C} \to \mathcal{I}_{\mathcal{H}}$ given by $f(c) = \{ (v,c(v)) : v \in V(G) \}$ is a bijection. Moreover, for each $j \in [q]$, $H[\bigcup_{v \in V(G)}\{(v,j)\}]$ is isomorphic to $G$. Thus $\bigcup_{v \in V(G)}\{(v,j)\} \in \mathcal{F}_{\mathcal{H}}$. It is also important to note that over all $q$-fold covers $\mathcal{H}$ of $G$, $|\mathcal{F}_{\mathcal{H}}|$ is maximized precisely when $\mathcal{H}$ is canonical (see Lemma~\ref{lem:toolbox} below).

\subsection{Counting Colorings and Our Motivation}

In 1912, Birkhoff~\cite{B12} introduced the notion of the chromatic polynomial in hopes of using it to make progress on the four color problem.  For $q \in \N$, the \emph{chromatic polynomial} of a graph $G$, denoted $P(G,q)$, is the number of proper $q$-colorings of $G$. It is easy to show that $P(G,q)$ is a polynomial in $q$ of degree $|V(G)|$ (see~\cite{DKT05}). For example, whenever $n, q \in \N$ it is well known that $P(K_n,q) = \prod_{i=0}^{n-1} (q-i)$, $P(C_n,q) = (q-1)^n + (-1)^n (q-1)$, and $P(T,q) = q(q-1)^{n-1}$ whenever $T$ is a tree on $n$ vertices, (see~\cite{B94, W01}).

Beyond graph coloring itself, chromatic polynomials and their generalizations are closely connected to the Tutte polynomial, one of the central invariants of modern combinatorics. Through these connections, enumerative coloring questions interact
with matroid theory~\cite{B72, C69}, hyperplane arrangements~\cite{GZ83}, statistical physics, and knot theory~\cite{W94}.
Suppose $G$ is an $n$-vertex graph. The \emph{Tutte polynomial} of $G$, denoted $T_G(x,y)$, is given by
\[
    T_G(x,y)
    =
    \sum_{A\subseteq E(G)}
    (x-1)^{\operatorname{comp}(\angles{G}{A})-\operatorname{comp}(G)}
    (y-1)^{|A|-n+\operatorname{comp}(\angles{G}{A})}.
\]
It is well known that $P(G,q)=(-1)^{n-\operatorname{comp}(G)}q^{\operatorname{comp}(G)}T_G(1-q,0)$ (see~\cite{EM10}).  In this paper, we show another specialization of the Tutte polynomial obtained from this formula by replacing $y=0$ with $y=1$ plays an essential role in establishing a lower bound on the guaranteed number of DP-colorings for a bipartite graph when $q$ is sufficiently large (see Remark~\ref{rem:Tutte}).

The notion of chromatic polynomial was extended to list coloring in the early 1990s by Kostochka and Sidorenko~\cite{AS90}.  If $L$ is a list assignment for a graph $G$, let $P_\ell(G,L)$ denote the number of proper $L$-colorings of $G$. For $q \in \N$, the \emph{list color function} of $G$, denoted $P_\ell(G,q)$, is the minimum value of $P_\ell(G,L)$ where the minimum is taken over all possible $q$-assignments $L$ for $G$.  Since a $q$-assignment could assign the same $q$ colors to every vertex in a graph, it is clear that $P_\ell(G,q) \leq P(G,q)$ for each $q \in \N$.  In general, the list color function can differ significantly from the chromatic polynomial for small values of $q$.  For example, for any $n \geq 2$, $P_{\ell}(K_{n,n^n},q) = 0$ and $P(K_{n,n^n},q) > 1$ whenever $q \in [2:n]$~\cite{ET79}.  On the other hand, in 2023 Dong and Zhang~\cite{DZ22}, improving on earlier results (see~\cite{D92, T09, WQ17}), showed that for any graph $G$, $P_\ell(G,q) = P(G,q)$ whenever $q \geq |E(G)|-1$.  

In 2019, the first and second named authors introduced a DP-coloring analogue of the chromatic polynomial in hopes of gaining a better understanding of DP-coloring and using it as a tool for making progress on some open questions related to the list color function~\cite{KM19}.  Since 2019 it has received some attention in the literature (see for example~\cite{BB23, BK23, DKM22, DY21, DZ23, LY22, M21, MT20} and references therein).  Specifically, suppose $\mathcal{H} = (L,H)$ is a cover of graph $G$.  Let $P_{DP}(G, \mathcal{H})$ be the number of $\mathcal{H}$-colorings of $G$.  Then, the \emph{DP color function} of $G$, denoted $P_{DP}(G,q)$, is the minimum value of $P_{DP}(G, \mathcal{H})$ where the minimum is taken over all  $q$-fold covers $\mathcal{H}$ of $G$.  
Note that this minimum may equivalently be taken over all full $q$-fold covers. Thus, whenever a $q$-fold cover $\mathcal{H}= (L,H)$ such that $P_{DP}(G,\mathcal{H})= P_{DP}(G,q)$ is assumed to exist, we may assume that the cover is full.  It is easy to prove that
\[
P_{DP}(G,q)\le P_\ell(G,q)\leq P(G,q)
\]
for each $q \in \N$.
Note that if $G$ is a disconnected graph with components $W_1, W_2, \ldots, W_k$, then $P_{DP}(G, q) = \prod_{i=1}^k P_{DP}(W_i,q)$.

The DP color function of certain graphs is identical to the corresponding list color function. For example, $P_{DP}(G,q) = P(G,q)$ for every $q \in \N$  whenever $G$ is chordal or an odd cycle (\cite{KM19}). On the other hand, some graphs have a DP color function that behaves in a very different manner than its list color function.  For example, it is known (\cite{KM19}) that if $G$ is a graph with girth that is even, there is an $N \in \N$ such that $P_{DP}(G,q) < P(G,q)$ whenever $q \geq N$. Asymptotically, we have the following result.

\begin{thm} [\cite{MT20}] \label{thm: general}
Suppose $g$ is an odd integer with $g \geq 3$.  If $G$ is a graph on $n$ vertices with girth $g$ or $g+1$, then $P(G,q) - P_{DP}(G,q) = O(q^{n-g})$ as $q \to \infty$.  Moreover, for each odd integer $g \geq 3$ there is a graph $G$ on $n$ vertices with girth $g+1$ such that $P(G,q) - P_{DP}(G,q) = \Theta(q^{n-g})$ as $q \to \infty$. 
\end{thm}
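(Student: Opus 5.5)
We prove the two parts separately. For the upper bound, count proper $\mathcal{H}$-colorings of an arbitrary full $q$-fold cover $\mathcal{H}=(L,H)$ of $G$ by inclusion--exclusion over edges. For $A\subseteq E(G)$, let $N_A$ be the number of transversals in which every edge of $A$ is ``bad'', meaning both chosen colors at its endpoints are adjacent in $H$. Then
\[
P_{DP}(G,\mathcal{H})=\sum_{A\subseteq E(G)}(-1)^{|A|}N_A .
\]
If $\angles{G}{A}$ is a forest, then each tree component lets its color at one vertex be chosen freely and forces the rest, because the matchings are perfect. Hence $N_A=q^{\operatorname{comp}(\angles{G}{A})}$, exactly as for the chromatic polynomial. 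The first edge sets on which the cover and the canonical count can differ are the $A$ containing a cycle, and a cycle has length at least $g$. For such $A$ we use the bound $N_A\le q^{\operatorname{comp}(\angles{G}{A})+\text{(cycle rank)}}\cdot$(lower-order). More precisely, $N_A\le q^{\,n-|A|+\text{cyclomatic}(A)}$, with equality in the canonical case. This gives
\[
P(G,q)-P_{DP}(G,\mathcal{H})=\sum_{A\ \text{not a forest}}(-1)^{|A|}\bigl(q^{\operatorname{comp}\angles{G}{A}}-N_A\bigr).
\]
Each summand has $|A|\ge g$ and is bounded by $O(q^{\operatorname{comp}(\angles{G}{A})})$. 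A set $A$ containing a cycle has $\operatorname{comp}(\angles{G}{A})\le n-|A|+1\le n-g+1$, and the difference in the leading term vanishes. That leading term is $q^{n-g+1}$, coming from $A$ equal to a single shortest cycle. To see this, note that for $A=E(C)$ with $C$ a $g$-cycle, $N_A$ equals $q^{n-g}$ times the number of fixed points of the permutation obtained by composing the matchings around $C$. This number is at most $q$, so the difference is $\ge 0$ but is $O(q^{n-g+1})$. I therefore need to be careful with this step.

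\textbf{Main obstacle.} The $g$-cycle terms naively give only $O(q^{n-g+1})$. The fix is to group all $A$ containing a given cycle $C$ of length $g$ or $g+1$, together with $A$ consisting of $C$ plus pendant forest edges. Summing over these with signs (a M\"obius/deletion argument on the extra edges) yields a factor $(q-1)$-type cancellation, or equivalently each extra edge costs $1/q$. The cleaner route is to write $P_{DP}(G,\mathcal{H})$ directly: the number of colorings of $G$ equals $q^{n-g}\cdot(\#\text{colorings of } C)\cdot(1+O(1/q))$ after fixing colors off $C$. The count on a cycle under an arbitrary cover is $(q-1)^g+(-1)^g(\text{fix}-1)$, which differs from the canonical count by $O(1)$ rather than $O(q)$. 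Propagating this $O(1)$ discrepancy times $q^{n-g}$ choices gives $O(q^{n-g})$. For girth $g+1$ (even), the cycle count differs by $O(1)$ and $|C|=g+1$ gives $O(q^{n-g-1})$ from a single cycle. However, pairs of cycles sharing paths still contribute $O(q^{n-g})$, so the stated exponent $n-g$ is the right uniform bound for both parities. All higher-rank $A$ contribute $O(q^{n-g})$ by the component count, so taking the minimum over $\mathcal{H}$ gives $P(G,q)-P_{DP}(G,q)=O(q^{n-g})$.

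For the lower bound (the sharpness statement), take $G$ to be a theta graph or $K_{2,3}$-like graph made of several internally disjoint paths of length $(g+1)/2$ between two vertices $u,v$, so the girth is $g+1$. Use the cover that is canonical except for a single cyclic shift on one edge of one path, making each cycle through that path ``twisted''. Computing $P_{DP}$ by conditioning on the colors of $u$ and $v$ and counting colorings along each path via the standard path-transfer formula shows the loss is $\Theta(q^{n-g})$, since two cycles are affected simultaneously. Combined with the upper bound, this gives $\Theta(q^{n-g})$.
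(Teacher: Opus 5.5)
Your upper bound has a genuine gap in the odd-girth case. You correctly isolate the dangerous terms. When $G$ has girth $g$, the only non-forest $A$ with $\operatorname{comp}(\angles{G}{A})=n-g+1$ are edge sets $A=E(C)$ of (necessarily chordless) $g$-cycles. For these, $N_{E(C)}=q^{n-g}f_C$, where $f_C$ is the number of fixed points of the composed permutation around $C$, and $f_C$ can be $0$.

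Your repair rests on a false claim. By your own transfer formula, a $g$-cycle has $(q-1)^g+(-1)^g(f_C-1)$ colorings, versus $(q-1)^g+(-1)^g(q-1)$ canonically. The discrepancy is $q-f_C$, which equals $q$ for a fixed-point-free twist. It is not $O(1)$, so ``propagating an $O(1)$ discrepancy'' proves nothing. Grouping with the sets $E(C)\cup\{e\}$ cannot rescue this either: those sets contribute only $O(q^{n-g})$ each, against a term of order $q^{n-g+1}$.

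The missing idea is the sign. Since $|E(C)|=g$ is odd and $N_{E(C)}\le q^{n-g+1}$ by Lemma~\ref{lem:toolbox}(\ref{tool:product}), the summand $(-1)^{g}\bigl(q^{n-g+1}-N_{E(C)}\bigr)$ of $P(G,q)-P_{DP}(G,\mathcal H)$ is $\le 0$. You already noted that $q^{n-g+1}-N_{E(C)}\ge 0$; you just did not combine it with the sign. Discard these summands and bound every remaining non-forest summand by $q^{\operatorname{comp}(\angles{G}{A})}\le q^{n-g}$. This gives $P(G,q)-P_{DP}(G,\mathcal H)\le 2^m q^{n-g}$ for every full cover, which is what you need. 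It is the same parity device the paper uses when it discards $S_{g+1}(\mathcal H^*)\ge 0$ in the proof of Theorem~\ref{thm:evencycles}. Separately, your inequality $\operatorname{comp}(\angles{G}{A})\le n-|A|+1$ is false once $\angles{G}{A}$ has cycle rank at least $2$. The bound $n-g+1$ should instead come from the cyclic component having at least $g$ vertices.

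Your treatment of girth $g+1$ is also mistaken, although the upper bound survives there because every non-forest $A$ already has $\operatorname{comp}(\angles{G}{A})\le n-g$. A single $(g+1)$-cycle $C$ contributes $q^{n-g-1}(q-f_C)$, which is of order $q^{n-g}$, not $O(q^{n-g-1})$. The loss does not come from pairs of cycles sharing paths; indeed Theorem~\ref{thm:evencycles} shows the leading coefficient is the number of $(g+1)$-cycles.

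So your theta-graph example does work, but for a different reason than you give, and you assert the computation rather than carry it out. With one edge of one path shifted, the $k-1$ shortest cycles through that path have $f_C=0$. Every other non-forest $A$ has $\operatorname{comp}(\angles{G}{A})\le n-g-1$. The loss is therefore $(k-1)q^{n-g}+O(q^{n-g-1})$. The simplest witness is $G=C_{g+1}$ itself, where your formula gives $P(C_{g+1},q)-P_{DP}(C_{g+1},q)=q=q^{n-g}$.
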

We improve Theorem~\ref{thm: general} by proving that for  every  graph $G$ with girth $g+1$ that is even, $P(G,q) - P_{DP}(G,q) = \Theta(q^{n-g})$ as $q \to \infty$. We also find the asymptotic leading coefficient of $P(G,q) - P_{DP}(G,q)$. 

Notice Theorem~\ref{thm: general} shows that asymptotically the DP color function of any graph remains close to its chromatic polynomial. This suggests the stronger possibility that, for every graph $G$, its DP color function may eventually agree exactly with some polynomial. This leads to the following fundamental open question.

\begin{ques} [\cite{KM19}] \label{ques: main}
For every graph $G$, is there an $N \in \N$ and a polynomial $p(q)$ such that $P_{DP}(G, q) = p(q)$ whenever $q \geq N$?
\end{ques}
Question~\ref{ques: main} has an affirmative answer whenever $G$ has a vertex $v$ such that $G-v$ is acyclic; in particular, this class contains all generalized theta graphs~\cite{HK21}. It is also affirmative when $G$ has a dominating vertex; in this case, the stronger conclusion $P_{DP}(G,q)=P(G,q)$ holds for all sufficiently large $q$~\cite{MT20}. In this paper, we answer Question~\ref{ques: main} affirmatively for every bipartite graph.

\subsection{Results and Outline of the Paper}
In Section~\ref{tools}, we develop several tools that apply to arbitrary graphs and will be used throughout the paper.    Importantly, Section~\ref{tools} ends with a proof that for any graph $G$ and sufficiently large $q$ there are $q$-fold covers of $G$ that are \emph{cycle-shattering}. We call a $q$-fold cover $\mathcal{H}$ a \emph{cycle-shattering cover} of $G$ if for all $A\subseteq E(G)$ such that $\angles{G}{A}$ contains a cycle, $|\mathcal{F}_{\angles{\mathcal{H}}{A}}| =0$.  Cycle-shattering covers are key to the proofs of Theorems~\ref{thm: bipartite} and \ref{thm:evencycles}.

In Section~\ref{bipartite}, we answer Question~\ref{ques: main} affirmatively for bipartite graphs by proving the following result. 
\begin{thm}\label{thm: bipartite}
    Suppose $G$ is a bipartite graph with $n$ vertices and $m$ edges. If $q \geq m + 2^m$ and $a_i$ is the number of acyclic spanning subgraphs of $G$ with $i$ components, then 
    \[
        P_{DP}(G,q) = \sum_{j=0}^{n-1}(-1)^ja_{n-j}q^{n-j}.
    \]
\end{thm}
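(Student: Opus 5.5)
Throughout, fix a full $q$-fold cover $\mathcal{H}$ of $G$; we may assume fullness when minimizing. The plan is to prove the lower bound $P_{DP}(G,\mathcal{H}) \geq \sum_{j}(-1)^j a_{n-j}q^{n-j}$ for every such $\mathcal{H}$, and then to show it is attained by a cycle-shattering cover (which exists for $q$ large by the end of Section~\ref{tools}).

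\textbf{Step 1: inclusion--exclusion over edge sets.} For each $e=uv\in E(G)$, let $B_e$ be the set of transversals that contain both ends of some edge of $E_H(L(u),L(v))$. Inclusion--exclusion gives
\[
P_{DP}(G,\mathcal{H})=\sum_{A\subseteq E(G)}(-1)^{|A|}N(A),
\]
where $N(A)$ counts transversals $T$ such that $T$ spans an $H$-edge over every $e\in A$. Each component $W$ of $\angles{G}{A}$ is handled separately: choices for $T\cap L(W)$ correspond to anti-colorings of $W$ in the subcover, and isolated vertices contribute a factor $q$. So $N(A)$ is a product of $q$ powers and anti-coloring counts $|\mathcal{F}|$ of subcovers.
\begin{itemize}
\item If $\angles{G}{A}$ is a forest, then since $\mathcal{H}$ is full, $N(A)=q^{\operatorname{comp}(\angles{G}{A})}$ regardless of $\mathcal{H}$. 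This is because each tree component has exactly $q$ anti-colorings, obtained by propagating along perfect matchings.
\item If $\angles{G}{A}$ contains a cycle, then $N(A)\ge 0$. It equals $0$ exactly when the relevant subcover has no anti-colorings, which is the cycle-shattering condition.
\end{itemize}

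\textbf{Step 2: identify the forest part with the target polynomial.} The forests with $i$ components have $n-i$ edges, so they contribute exactly $\sum_i (-1)^{n-i}a_iq^i$, which is the stated formula. Hence
\[
P_{DP}(G,\mathcal{H})=\Phi(q)+\sum_{A \text{ cyclic}}(-1)^{|A|}N(A),
\]
with $\Phi(q)$ the target polynomial, and a cycle-shattering cover realizes $\Phi(q)$. What remains is to show that the cyclic error term is nonnegative for every full cover when $q\ge m+2^m$.

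\textbf{Step 3: the sign of the cyclic error term (main obstacle).} Signs alternate, so plain positivity fails and some structure is needed. I would regroup by anti-coloring pieces rather than by edge sets. For a transversal $T$, let $S(T)\subseteq E(G)$ be the set of edges $e$ such that $T$ spans an $H$-edge over $e$. Then
\[
N(A)=\#\{T: A\subseteq S(T)\},
\]
so the whole alternating sum equals $\#\{T:S(T)=\emptyset\}$. That identity is just the original count, so the work is to compare with $\Phi$. Let $M(A)=\#\{T:A\subseteq S(T)\}$ on the matroid side. The difference $P_{DP}(G,\mathcal{H})-\Phi(q)$ equals
\[
\sum_T\Bigl(\sum_{A\subseteq S(T)}(-1)^{|A|}-\sum_{A\subseteq S(T),\,A\text{ forest}}(-1)^{|A|}\Bigr)
\]
plus a correction counting the forests $A$ for which $T$ spans $A$. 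Since
\[
\sum_{A\subseteq S(T)}(-1)^{|A|}=[S(T)=\emptyset],
\]
the per-$T$ quantity is
\[
-\sum_{A\subseteq S(T),\,A\text{ acyclic}}(-1)^{|A|}+[S(T)=\emptyset]=\sum_{A\subseteq S(T),\,A\text{ has a cycle}}(-1)^{|A|}.
\]
This is, up to sign, the reduced Euler characteristic of the independence complex of the graphic matroid on $S(T)$. That is $(-1)^{r}T_{\angles{G}{S(T)}}(0,1)$-type and is known to be zero unless $S(T)$ has no coloops, and is sign-determined otherwise. Bipartiteness should enter here: every cycle in $\angles{G}{S(T)}$ is even, so the matroid's rank and nullity parities should force this sign to be nonnegative for every $T$. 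This gives $P_{DP}(G,\mathcal{H})\ge\Phi(q)$.

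I expect the sign computation via the broken-circuit/NBC complex, together with the parity argument from bipartiteness, to be the hardest part. If it fails, the fallback is to bound the cyclic terms, using $q\ge m+2^m$ to make the leading nonzero cyclic contributions dominate.
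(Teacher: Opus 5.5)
Your Steps 1 and 2 are correct and match the paper's reduction. For a full cover, every forest $A$ contributes $q^{\operatorname{comp}(\angles{G}{A})}$, so $P_{DP}(G,\mathcal{H})-\Phi(q)$ is the signed sum over cyclic $A$, and a cycle-shattering cover attains $\Phi(q)$. The gap is in Step 3, whose key claim is false. For $S=S(T)\neq\emptyset$, your per-transversal quantity equals $-\sum_{A\subseteq S,\,A\text{ forest}}(-1)^{|A|}=(-1)^{r(S)+1}T_{\angles{G}{S}}(0,1)$, where $r(S)=n-\operatorname{comp}(\angles{G}{S})$. Bipartiteness says nothing about the parity of $r(S)$ when $S$ is bridgeless. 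Concretely, let $S(T)$ be the edge set of a copy of $K_{2,3}$; this happens for each of the $q$ transversals $\{(v,j):v\in V(G)\}$ of the canonical cover of $K_{2,3}$. The cyclic subsets are those containing both edges at two or three of the degree-$2$ vertices. Each such edge pair contributes $+1$ if fully included and $1-2=-1$ otherwise, so the sum is $3\cdot 1\cdot(-1)+1=-2<0$. Two vertex-disjoint $4$-cycles give $-1$. A quick sanity check also shows that no $q$-independent, transversal-by-transversal sign argument can succeed: it would give $P_{DP}(G,q)\ge\Phi(q)$ for every $q$. Yet $P_{DP}(K_{2,3},1)=0$, while $\Phi(1)=\sum_{A\text{ forest}}(-1)^{|A|}=2$.

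So the hypothesis $q\ge m+2^m$ must do real work, and your fallback is where the proof actually lives. As stated, though, it contains none of the ingredients that make it go through. The paper's argument runs as follows.
\begin{enumerate}[(1)]
\item If the cyclic sum is nonzero, let $2k$ be the minimum length of a cycle $C$ with $|\mathcal{F}_{\angles{\mathcal{H}}{E(C)}}|\ge 1$. By monotonicity (Lemma~\ref{lem:toolbox}(ii)), every $A$ with $|\mathcal{F}_{\angles{\mathcal{H}}{A}}|\ge 1$ has all its cycles of length at least $2k$, so $\operatorname{comp}(\angles{G}{A})\le n-2k+1$.
\item The top layer consists exactly of edge sets of $2k$-cycles and contributes $+q^{n-2k}\rho_{2k}$ with $\rho_{2k}\ge 1$. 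It is positive because $2k$ is even; this is the crucial use of bipartiteness.
\item The second layer cannot be bounded by a constant times $q^{n-2k}$, since that has the same order as the top layer when $\rho_{2k}=1$. Instead, its even-size members (including $K_{2,3}$ when $k=2$) are dropped as nonnegative. Its odd-size members are exactly $E(C)\cup\{e\}$ with $e$ having at most one end on $C$. For these, Lemma~\ref{lem:toolbox}(iii),(v) give $|\mathcal{F}_{\angles{\mathcal{H}}{E(C)\cup\{e\}}}|\le q^{n-2k-1}|\mathcal{F}_{\angles{\mathcal{H}}{E(C)}[2]}|$, so this layer is at least $-(m-2k)q^{n-2k-1}\rho_{2k}$.
\item All deeper layers are crudely at least $-2^m q^{n-2k-1}$. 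Then $q\ge m+2^m$ gives a total of at least $q^{n-2k-1}\bigl((q-m+2k)\rho_{2k}-2^m\bigr)\ge 0$.
\end{enumerate}
Without the minimal-cycle normalization and the comparison of the second layer to $\rho_{2k}$, ``the leading cyclic contributions dominate'' is not yet an argument.
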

\begin{rem}\label{rem:Tutte} 
    Note that  $\sum_{j=0}^{n-1}(-1)^ja_{n-j}q^{n-j}$ is exactly $(-1)^{n-\operatorname{comp}(G)}q^{\operatorname{comp}(G)}T_G(1-q,1)$ for all $q$. Recall that $P(G,q)=(-1)^{n-\operatorname{comp}(G)}q^{\operatorname{comp}(G)}T_G(1-q,0)$. Thus for bipartite graphs $G$ that contain a cycle, the eventual polynomial representing $P_{DP}(G,q)$ differs from $P(G,q)$, although both arise from specializations of the Tutte polynomial. This difference is further studied in Theorem~\ref{thm:evencycles} below for all graphs with girth that is even.
\end{rem}
Note that if the answer to Question~\ref{ques: main} is yes for a given connected graph $G$, $p(q)$ does not necessarily equal $(-1)^{n-1}qT_G(1-q,y)$ for any fixed $y \in \R$. With some computation, one can see the polynomial representation of $P_{DP}(G,q)$ given by Theorem 6 part (ii) in \cite{HK21} applied to the Theta graph $G =\Theta(1,2,3)$ gives a counterexample.\footnote{ A \emph{Theta Graph} $G= \Theta(\ell_1,\ell_2,\ell_3)$ consists of a pair of end vertices joined by $3$ internally disjoint paths of lengths $\ell_1, \ell_2,\ell_3 \in \N$.}

We make no attempt to optimize our bound on $q$ in Theorem~\ref{thm: bipartite}.  It would be interesting to determine how low one could take the bound on $q$ in Theorem~\ref{thm: bipartite}.  As a starting point for the study of such an optimal lower bound, we determine an exact formula for $P_{DP}(K_{2,n},q)$ whenever $q \in \N$.

\begin{thm} \label{thm:k2n}
Suppose $q, n \in \N$ and $p$, $r$ are nonnegative integers satisfying $qn=q^{2}p+r$ where $p = \lfloor{n}/{q}\rfloor$. Then, $P_{DP}(K_{2,n},q) = \sum_{i=1}^{q^{2}}(q-1)^{a_{i}}(q-2)^{n-a_{i}}$ where, for each $j \in [q^2]$,
\[
    a_j=
    \begin{cases}
    p+1 & \text{ if } j \leq r \\
    p & \text{ if } j > r.
    \end{cases}
\]
\end{thm}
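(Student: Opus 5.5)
Throughout, write $V(K_{2,n})=\{u,w\}\cup\{v_1,\dots,v_n\}$, where $u$ and $w$ are the two vertices of degree $n$. As noted in the introduction, the minimum defining $P_{DP}(G,q)$ may be taken over full $q$-fold covers, so I would fix a full $q$-fold cover $\mathcal{H}=(L,H)$ of $K_{2,n}$. For each $i\in[n]$, the perfect matchings $E_H(L(u),L(v_i))$ and $E_H(L(w),L(v_i))$ compose to a bijection $\sigma_i\colon L(u)\to L(w)$. Here $\sigma_i(a)$ is the element of $L(w)$ matched to the same vertex of $L(v_i)$ as $a$.

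\textbf{Step 1: count colorings for a fixed cover.} I would count $\mathcal{H}$-colorings by first choosing $a\in L(u)$ and $b\in L(w)$, which gives $q^2$ choices. Since $u$ and $w$ are nonadjacent, each such pair extends. Given $(a,b)$, each $v_i$ has $q-1$ available colors if $\sigma_i(a)=b$ (the forbidden vertices coincide), and $q-2$ otherwise. The $v_i$ are pairwise nonadjacent, so with $c(a,b)=|\{i:\sigma_i(a)=b\}|$,
\[
P_{DP}(K_{2,n},\mathcal{H})=\sum_{(a,b)\in L(u)\times L(w)} f(c(a,b)),\qquad f(t)=(q-1)^t(q-2)^{n-t}.
\]
Since each $\sigma_i$ is a bijection, it contributes exactly $q$ pairs. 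Hence $\sum_{(a,b)}c(a,b)=nq$, with each $c(a,b)\in[0:n]$.

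\textbf{Step 2: lower bound by convexity.} I would show that $f$ is convex on $[0:n]$, that is, $f(t+1)-f(t)$ is nondecreasing.
\begin{itemize}
\item For $q\ge 3$, $f$ is a positive geometric sequence with ratio $(q-1)/(q-2)>1$, so it is convex.
\item For $q=2$, $f$ is $0$ except $f(n)=1$, which is convex.
\item For $q=1$, $f(t)=0^t(-1)^{n-t}$ vanishes unless $t=0$. Here I would check the case directly: $P_{DP}=0$ when $n\ge1$, and the formula gives $a_1=n$, so its single term is $0^n=0$.
\end{itemize}
For convex $f$, a standard smoothing argument gives the bound. If two entries differ by at least $2$, moving one unit from the larger to the smaller does not increase the sum. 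Hence the minimum of $\sum f(c_j)$ over integer vectors $(c_1,\dots,c_{q^2})$ with sum $nq$ is attained at the balanced vector. That vector has $r$ entries equal to $p+1$ and the rest equal to $p$, where $nq=q^2p+r$ and $0\le r<q^2$. This gives $P_{DP}(K_{2,n},\mathcal{H})\ge\sum_{j=1}^{q^2}(q-1)^{a_j}(q-2)^{n-a_j}$ for every full cover.

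\textbf{Step 3: a cover attaining the bound.} Write $n=pq+s$ with $0\le s<q$, so that $r=sq$. Identify $L(u)$, $L(w)$ and each $L(v_i)$ with $\Z_q$. I would match $L(u)$ to $L(v_i)$ by the identity and $L(w)$ to $L(v_i)$ so that $\sigma_i(x)=x+i \pmod q$.
\begin{itemize}
\item The $q$ shifts $x\mapsto x+k$ partition $\Z_q\times\Z_q$ into $q$ classes of $q$ pairs each.
\item Shift $k$ is used by exactly the indices $i\equiv k\pmod q$ with $i\in[n]$, which is $p+1$ indices for $s$ residues and $p$ indices for the others.
\item So exactly $sq=r$ pairs have $c=p+1$ and the remaining pairs have $c=p$.
\end{itemize}
Equality then holds, proving the theorem.

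The main point needing care is the convexity/smoothing step, in particular handling the degenerate small cases $q\le 2$ where $f$ has zero values. The rest is routine.
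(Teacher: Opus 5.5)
Your proof is correct and follows the paper's argument. You partition colorings by the pair of colors on the two degree-$n$ vertices to get $\sum f(c(a,b))$ with $\sum c(a,b)=nq$, balance via the one-unit exchange argument (the content of the paper's Lemma~\ref{lem:min}), and attain the bound with the cyclic-shift cover. The only difference is cosmetic: your convexity formulation also covers $q=2$ uniformly, whereas the paper treats $q\le 2$ separately (via $P_3$, and a $4$-cycle forcing $P_{DP}(K_{2,n},2)=0$) and proves Lemma~\ref{lem:min}, with strict inequality, only for $q\ge 3$.
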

It follows from Theorem~\ref{thm:k2n} that, for all $q,n\in\N$,
$P_{DP}(K_{2,n},q)$ is identical to $(-1)^{n+1}qT_{K_{2,n}}(1-q,1)$ (the formula in Theorem~\ref{thm: bipartite}) if and only if $q\geq |V(K_{2,n})|-2$ or $q=2$. It is natural to ask whether the threshold, $|E(G)|+2^{|E(G)|}$, on $q$ in Theorem~\ref{thm: bipartite} can be lowered to $O(|V(G)|)$ or even a polynomial in $|V(G)|$. Such questions on bounding the threshold on number of colors needed to observe the equality between list color function and the chromatic polynomial have been well-studied in the literature, see~\cite{DZ22, KK22, T09, WQ17}.

Finally in Section~\ref{girth}, we improve on the upper bound on $P(G,q) - P_{DP}(G,q)$ given in Theorem~\ref{thm: general} by giving an asymptotically sharp formula for this difference when $G$ is a graph with girth that is even. 
\begin{thm} \label{thm:evencycles}
     Let $g \geq 3$ be odd. Suppose $G$ is an $n$-vertex graph with $m$ edges and girth $g+1$, and let $t$ be the number of $(g+1)$-cycles in $G$. Then 
     \[
        P(G,q) - P_{DP}(G,q) = t q^{n-g} + O(q^{n-g-1})
     \]
     as $q \to \infty$.  
\end{thm}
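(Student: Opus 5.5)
We work with an inclusion–exclusion expansion for the number of colorings of a full $q$-fold cover $\mathcal{H}$ of $G$. The bad events are indexed by the edges of $H$ lying between $L(u)$ and $L(v)$ for $uv\in E(G)$; grouping them by the underlying edge set $A\subseteq E(G)$ gives
\[
P_{DP}(G,\mathcal{H})=\sum_{A\subseteq E(G)}(-1)^{|A|}N_A(\mathcal{H}),
\]
where $N_A(\mathcal{H})$ counts the transversals $T$ such that, for each $uv\in A$, the chosen colors at $u$ and $v$ are matched in $H$. If $\angles{G}{A}$ is a forest with $k$ components, then $N_A=q^{k}$, since each tree component has exactly $q$ consistent choices. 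If $A$ contains a cycle, then $N_A$ counts the component-wise consistent choices, i.e.\ it is governed by the anti-colorings of the subcovers $\angles{\mathcal{H}}{A}$ of the components. For a canonical cover this reproduces $P(G,q)$ via Whitney's expansion. For the difference with the canonical cover, only those $A$ whose cyclic components fail to lift matter.

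Order the terms by $q$-degree. Write $k=\operatorname{comp}(\angles{G}{A})$. Then $N_A\le q^{k}$, with equality exactly when every component of $\angles{G}{A}$ lifts in all $q$ ways. The difference $P(G,q)-P_{DP}(G,\mathcal{H})=\sum_A(-1)^{|A|}(q^{k}-N_A)$ has nonzero terms only when $A$ contains a cycle. Since every cycle has length at least $g+1$, such an $A$ has $|A|\ge g+1$, so $k\le n-g$, with equality iff $\angles{G}{A}$ is a single $(g+1)$-cycle $C$ plus isolated vertices. For that $A$, $N_A=q^{n-g-1}\cdot |\mathcal{F}_{\angles{\mathcal{H}}{C}}|$. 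A cycle cover decomposes into cycles of the lifted graph, so $|\mathcal{F}_{\angles{\mathcal{H}}{C}}|$ equals the number of fixed points of the composite permutation around $C$. Hence this term contributes $(-1)^{g+1}\bigl(q^{n-g}-q^{n-g-1}\mathrm{fix}_C\bigr)=q^{n-g}-q^{n-g-1}\mathrm{fix}_C$, because $g+1$ is even. Every other cyclic $A$ has $k\le n-g-1$ and contributes $O(q^{n-g-1})$, with constants depending only on $m$, since $0\le N_A\le q^{k}$. Therefore, uniformly over all covers,
\[
P(G,q)-P_{DP}(G,\mathcal{H})=tq^{n-g}-q^{n-g-1}\sum_{C}\mathrm{fix}_C+O(q^{n-g-1}).
\]

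This yields the upper bound $P(G,q)-P_{DP}(G,q)\le tq^{n-g}+O(q^{n-g-1})$ immediately, since $\mathrm{fix}_C\ge 0$. For the matching lower bound, take a cycle-shattering cover, which Section~\ref{tools} guarantees exists for all large $q$. On such a cover every $\mathrm{fix}_C=0$, because $|\mathcal{F}_{\angles{\mathcal{H}}{C}}|=0$. Then $P(G,q)-P_{DP}(G,q)\ge P(G,q)-P_{DP}(G,\mathcal{H})=tq^{n-g}+O(q^{n-g-1})$.

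The main obstacle is making the error term uniform over all covers. I will handle it by bounding $|q^{k}-N_A|\le q^{k}$ with at most $2^m$ sets $A$, which gives an error of at most $2^m q^{n-g-1}$ independent of $\mathcal{H}$. I also need to check carefully that $N_A$ factors over components as claimed. This factorization holds because the constraints of a component involve only its own vertices, while isolated vertices contribute a factor $q$ each.
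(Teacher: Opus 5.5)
Your proof is correct and follows essentially the same route as the paper. Both arguments use inclusion--exclusion over $A\subseteq E(G)$, the bound $|\mathcal{F}_{\angles{\mathcal{H}}{A}}|\le q^{\operatorname{comp}(\angles{G}{A})}$ with equality for acyclic $A$ in full covers, the positive sign of the $(g+1)$-cycle terms for the upper bound, and a cycle-shattering cover for the matching lower bound. The differences are organizational: you subtract term by term against the canonical cover, which is Whitney's expansion, and you group by number of components rather than by $|A|$. This gives one identity valid for all full covers and avoids citing the coefficient formula of Proposition~\ref{prop: whitney}.
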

If $G$ has girth $g+1$ where $g+1$ is even, and $P_{DP}(G,q)$ is given by a polynomial $p(q)$ for all sufficiently large $q$, then Theorem~\ref{thm:evencycles} determines the coefficients of the $g+1$ highest-order terms of $p(q)$.

\section{General Tools}\label{tools}
In this section we present some important tools that will be used throughout the paper and may be of independent interest as they hold for all graphs.  Crucially, the following inclusion--exclusion formula is well-known.
\begin{lem}\label{lem: PIE}(\cite{MT20})
    For any graph $G$ and any $q$-fold cover $\mathcal{H} = (L,H)$ of $G$, 
    \[
        P_{DP}(G,\mathcal{H})
        = \sum_{A \subseteq E(G)} (-1)^{|A|}\, \bigl|\mathcal{F}_{\angles{\mathcal{H}}{A}}\bigr|.
    \]
\end{lem}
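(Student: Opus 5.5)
We prove the formula by a standard inclusion--exclusion over the edges of $G$, with the bad events being that an edge's cover-edge lies inside the transversal. Fix a $q$-fold cover $\mathcal{H}=(L,H)$ of $G$. For each $e=uv\in E(G)$, let $B_e\subseteq\mathcal{T}_{\mathcal{H}}$ be the set of transversals $T$ such that $T\cap L(u)$ and $T\cap L(v)$ are adjacent in $H$. Since $H$ has no edges inside any $L(w)$ and no edges between $L(u),L(v)$ for $uv\notin E(G)$, a transversal $T$ is independent exactly when $T\notin\bigcup_{e\in E(G)}B_e$. Inclusion--exclusion then gives
\[
P_{DP}(G,\mathcal{H})=\sum_{A\subseteq E(G)}(-1)^{|A|}\Bigl|\bigcap_{e\in A}B_e\Bigr|,
\]
where the term $A=\emptyset$ counts all of $\mathcal{T}_{\mathcal{H}}$, by the paper's convention on empty intersections.

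It remains to identify $\bigcap_{e\in A}B_e$ with $\mathcal{F}_{\angles{\mathcal{H}}{A}}$. Recall that $\angles{\mathcal{H}}{A}$ is a cover of $\angles{G}{A}$, with vertex set $V(H)$ and edges $E_H(L(u),L(v))$ for $uv\in A$. Its transversals are exactly the transversals of $\mathcal{H}$.

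Suppose $T\in\bigcap_{e\in A}B_e$. Define the map $\phi\colon V(G)\to T$ sending $w$ to the unique element of $T\cap L(w)$. This map is a bijection. An edge of $H_{\angles{}{A}}[T]$ must join $L(u)$ and $L(v)$ for some $uv\in A$, and then $\phi$ sends the edge $uv$ of $\angles{G}{A}$ to it. Conversely, every $uv\in A$ yields an edge $\phi(u)\phi(v)$, because $T\in B_{uv}$. So $\phi$ is an isomorphism $\angles{G}{A}\to H_{\angles{}{A}}[T]$, and hence $T\in\mathcal{F}_{\angles{\mathcal{H}}{A}}$.

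For the converse, suppose $T\in\mathcal{F}_{\angles{\mathcal{H}}{A}}$, so $H_{\angles{}{A}}[T]\cong\angles{G}{A}$. Here is the one point needing care: the definition only requires abstract isomorphism, not that $\phi$ itself be an isomorphism. We handle it by counting edges. Each $uv\in A$ contributes at most one edge of $H_{\angles{}{A}}[T]$, namely $\phi(u)\phi(v)$ if it is present. These are the only possible edges, and distinct elements of $A$ give distinct pairs. Since the isomorphic graph has exactly $|A|$ edges, every $\phi(u)\phi(v)$ with $uv\in A$ must be present. Thus $T\in B_e$ for all $e\in A$.

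Substituting $\bigl|\bigcap_{e\in A}B_e\bigr|=|\mathcal{F}_{\angles{\mathcal{H}}{A}}|$ into the inclusion--exclusion identity gives the lemma. The only mild obstacle is the converse inclusion, which the edge-counting argument resolves.
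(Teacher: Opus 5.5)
Your proof is correct and is essentially the standard argument behind the paper's citation of \cite{MT20}: inclusion--exclusion over the events $\mathcal{E}_{\mathcal{H}}(uv)$ (your $B_e$), combined with the identification $\mathcal{F}_{\angles{\mathcal{H}}{A}}=\bigcap_{uv\in A}\mathcal{E}_{\mathcal{H}}(uv)$ that the paper records as Observation~\ref{obs: F_equals_E_intersection}. Your edge-counting step also supplies the detail the paper calls immediate: an abstract isomorphism $H_{\angles{}{A}}[T]\cong\angles{G}{A}$ already forces $T\cap L(u)$ and $T\cap L(v)$ to be adjacent for every $uv\in A$.
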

Lemma~\ref{lem: PIE} makes explicit the role anticolorings play in computing the number of proper $\mathcal{H}$-colorings of $G$. Consequently, we develop in Lemma~\ref{lem:toolbox} some basic properties of $\mathcal{F}_{\angles{\mathcal{H}}{A}}$ when $A \subseteq E(G)$. We conclude the section by proving in Lemma~\ref{lem: shattering_existence} that every graph admits a cycle-shattering $q$-fold cover whenever $q$ is sufficiently large.  Intuitively, one might expect cycle-shattering covers to play an important role in studying DP color functions of bipartite graphs, since for every even cycle $C$ and every $q\geq2$, any $q$-fold cover $\mathcal{H}$ of $C$ satisfying $P_{DP}(C,\mathcal{H})=P_{DP}(C,q)$ has no anti-$\mathcal{H}$-colorings of $C$~\cite{KM19}.

Throughout this section, suppose $G$ is an $n$-vertex graph with $m$ edges and that $\mathcal{H} = (L,H)$ is a $q$-fold cover of $G$. For $A\subseteq E(G)$, let $U(A)$ denote the set of vertices incident to an edge of $A$. For each $uv \in E(G)$, we write $\mathcal{E}_\mathcal{H}(uv) = \{ T \in \mathcal{T}_\mathcal{H}: |E(H[T]) \cap E_H(L(u), L(v))| =1\}$. Note that $\mathcal{E}_\mathcal{H}(uv)$ is the set of transversals of $\mathcal{H}$ where the edge $uv$ prevents the transversal from being independent; in fact, $\mathcal{E}_\mathcal{H}(uv) = \mathcal{F}_{\angles{\mathcal{H}}{\{uv\}}}$.  The following observation is immediate.
\begin{obs}  \label{obs: F_equals_E_intersection}(\cite{Sharma24})
    For any graph $G$ and cover $\mathcal{H}$ of $G$,
    $\mathcal{F}_{\mathcal{H}} = \bigcap_{uv \in E(G)} \mathcal{E}_{\mathcal{H}}(uv)$.
\end{obs}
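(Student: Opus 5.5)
We prove the two inclusions between $\mathcal{F}_{\mathcal{H}}$ and $\bigcap_{uv \in E(G)} \mathcal{E}_{\mathcal{H}}(uv)$ directly from the definitions, working inside the common ambient set $\mathcal{T}_{\mathcal{H}}$. The key structural fact is that, for a transversal $T$, every edge of $H[T]$ lies in exactly one set $E_H(L(u),L(v))$ with $uv \in E(G)$. This holds because the sets $L(v)$ are independent and pairwise disjoint, and edges of $H$ only join $L(u)$ to $L(v)$ when $uv\in E(G)$. Moreover, since $|T\cap L(u)|=|T\cap L(v)|=1$, the set $E(H[T])\cap E_H(L(u),L(v))$ has size $0$ or $1$.

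First, suppose $T \in \bigcap_{uv\in E(G)}\mathcal{E}_{\mathcal{H}}(uv)$. For each $uv\in E(G)$, write $T\cap L(u)=\{c_u\}$ and $T\cap L(v)=\{c_v\}$; then $c_uc_v\in E(H)$. By the structural fact, $E(H[T])$ consists exactly of the edges $c_uc_v$ for $uv\in E(G)$. Hence the bijection $v\mapsto c_v$ from $V(G)$ to $T$ maps edges of $G$ onto edges of $H[T]$, so $H[T]\cong G$ and $T\in\mathcal{F}_{\mathcal{H}}$.

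Conversely, suppose $T\in\mathcal{F}_{\mathcal{H}}$. The map $\phi: v\mapsto c_v$ is a bijection $V(G)\to T$. If $uv\in E(G)$, then $c_uc_v$ is the only possible edge of $H[T]$ between $L(u)$ and $L(v)$. If $uv\notin E(G)$, there is no such edge at all. Thus $\phi$ maps edges of $H[T]$ injectively into edges of $G$, so $|E(H[T])|\le |E(G)|$, with equality exactly when every $uv \in E(G)$ has $c_uc_v\in E(H)$. Since $H[T]\cong G$ gives $|E(H[T])|=|E(G)|$, equality holds, so $T\in\mathcal{E}_{\mathcal{H}}(uv)$ for every $uv\in E(G)$.

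The main subtlety is this converse direction. Isomorphism to $G$ is only abstract and need not come via $\phi$, so we cannot read off the edges directly. The edge-count argument above handles this, since $G$ is finite. Finally, if $E(G)=\emptyset$, the empty intersection is $\mathcal{T}_{\mathcal{H}}$ by the stated convention. In that case $H$ has no edges, so every transversal induces an edgeless graph isomorphic to $G$, and the identity still holds.
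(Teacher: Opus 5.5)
Your proof is correct, and it is the direct unpacking of the definitions that the paper leaves implicit; the paper gives no proof and simply calls the observation immediate. Your edge-counting step correctly handles the one real subtlety, namely that $H[T]\cong G$ is an abstract isomorphism and not a priori the one induced by $v\mapsto c_v$. One trivial slip: it is $\phi^{-1}$, not $\phi$, that sends edges of $H[T]$ injectively into $E(G)$.
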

We next collect several properties of anti-colorings that will be used repeatedly.
\begin{lem}\label{lem:toolbox}
Let $G$ be a graph and $\mathcal{H}$ be a $q$-fold cover of $G$. Fix $A \subseteq E(G)$. Then the following statements hold.
\begin{enumerate}[(i)]
    \item If $A \neq \emptyset$ and $\angles{G}{A}[2]$ is connected, then for any  distinct $F,F'\in \mathcal{F}_{\angles{\mathcal H}{A}[2]}$, $F\cap L(v)\neq F'\cap L(v)$ for every $v\in U(A)$. \label{tool:intersection_L(v)}
    \item For every $A'\subseteq A$,  $|\mathcal{F}_{\angles{\mathcal{H}}{A'}}|\geq |\mathcal{F}_{\angles{\mathcal{H}}{A}}|$. \label{tool:edgesubset_single}
    \item  If $\emptyset\neq A'\subseteq A$ and $\angles{G}{A}[2]$ is connected, then $|\mathcal{F}_{\angles{\mathcal{H}}{A'}[2]}|\geq |\mathcal{F}_{\angles{\mathcal{H}}{A}[2]}|$. 
    \label{tool:edgesubset_double} 
    \item For every component $W$ of $\angles{G}{A}[2]$, $|\mathcal{F}_{\angles{\mathcal{H}}{E(W)}[2]}| \leq q$,
    with equality if and only if $\angles{\mathcal{H}}{E(W)}[2]$ admits a canonical labeling.\label{tool:canon} 
    \item Let $W_1,\dots,W_k$ be the components of $\angles{G}{A}[2]$, where $k$ may be zero.  Then
    \[
    |\mathcal{F}_{\angles{\mathcal{H}}{A}}| = q^{n-|U(A)|}\prod_{i=1}^{k}|\mathcal{F}_{\angles{\mathcal{H}}{E(W_i)}[2]}| \leq q^{\operatorname{comp}({\angles{G}{A}})}.
    \]
    Consequently, $|\mathcal{F}_{\angles{\mathcal{H}}{A}}|=0$ if and only if there exists  $i \in [k]$ such that  $|\mathcal{F}_{\angles{\mathcal{H}}{E(W_i)}[2]}| = 0$. \label{tool:product} 
\end{enumerate}
\end{lem}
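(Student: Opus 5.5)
We prove the five parts in the order (i), (iv), (v), (ii), (iii). The main tool is that a set $F\in\mathcal{F}_{\angles{\mathcal{H}}{A}}$ is a transversal for which $H[F]$ contains, for every edge $uv\in A$, the unique matching edge between $F\cap L(u)$ and $F\cap L(v)$. Since $E_H(L(u),L(v))$ is a matching, the choice of $F\cap L(u)$ forces the choice of $F\cap L(v)$.

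\textbf{Part (i).} Suppose $F,F'\in\mathcal{F}_{\angles{\mathcal H}{A}[2]}$ agree at some $v\in U(A)$. We propagate along the edges of the connected graph $\angles{G}{A}[2]$. If $F\cap L(v)=F'\cap L(v)$ and $vw\in A$, then both $F\cap L(w)$ and $F'\cap L(w)$ must be the unique partner of that common vertex in the matching $E_H(L(v),L(w))$, so they agree at $w$. By induction along paths, $F=F'$.

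\textbf{Part (iv).} Fix any $v\in V(W)$. By (i), the map $F\mapsto F\cap L(v)$ is injective from $\mathcal{F}_{\angles{\mathcal{H}}{E(W)}[2]}$ into $L(v)$, so the count is at most $q$.

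For the equality case, first suppose a canonical labeling $\lambda$ exists. Then for each $j\in[q]$ the set $\{(u,j)\}_{u\in V(W)}$ is an anticoloring, which gives $q$ of them.

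Conversely, suppose there are exactly $q$ anticolorings $F_1,\dots,F_q$. By (i) they partition each $L(u)$, $u\in V(W)$. Define $\lambda(c)=j$ when $c\in F_j$; this is a bijection on each $L(u)$. For an edge $uw\in E(W)$, if $c\in F_j\cap L(u)$ and $c'\in L(w)$, then $cc'\in E(H)$ exactly when $c'$ is the matching partner of $c$, which lies in $F_j$, i.e.\ exactly when $\lambda(c')=j$. Hence $\lambda$ is canonical.

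\textbf{Part (v).} A transversal $T$ lies in $\mathcal{F}_{\angles{\mathcal{H}}{A}}$ iff, for each component $W_i$, the restriction $T\cap\bigcup_{u\in V(W_i)}L(u)$ lies in $\mathcal{F}_{\angles{\mathcal{H}}{E(W_i)}[2]}$. The choices at vertices outside $U(A)$ are unrestricted, giving $q^{n-|U(A)|}$ options. This yields the product formula.

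For the inequality, apply (iv) to each factor to get at most $q^{n-|U(A)|+k}$. Since the components of $\angles{G}{A}$ are the $k$ components $W_i$ together with the $n-|U(A)|$ isolated vertices, this exponent equals $\operatorname{comp}(\angles{G}{A})$. The "consequently" statement is immediate from the product form.

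\textbf{Part (ii).} By Observation~\ref{obs: F_equals_E_intersection} applied to the cover $\angles{\mathcal{H}}{A}$ of $\angles{G}{A}$, we have $\mathcal{F}_{\angles{\mathcal{H}}{A}}=\bigcap_{e\in A}\mathcal{E}_{\mathcal{H}}(e)$. This uses that the $H$-edges of the subcover over $e$ are the same as in $\mathcal{H}$, so $\mathcal{E}_{\angles{\mathcal H}{A}}(e)=\mathcal{E}_{\mathcal H}(e)$. An intersection over the larger index set $A$ is contained in the intersection over $A'\subseteq A$, so $\mathcal{F}_{\angles{\mathcal{H}}{A}}\subseteq\mathcal{F}_{\angles{\mathcal{H}}{A'}}$, which gives the inequality.

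\textbf{Part (iii).} This is the step needing care, because the vertex sets differ: $U(A')\subseteq U(A)$, and $\angles{G}{A'}[2]$ may be disconnected. I would avoid counting via (v) and use a direct injection instead. Restriction $F\mapsto F\cap\bigcup_{u\in U(A')}L(u)$ sends $\mathcal{F}_{\angles{\mathcal H}{A}[2]}$ into $\mathcal{F}_{\angles{\mathcal H}{A'}[2]}$, since the required edges for $A'$ are a subset of those for $A$.

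This map is injective. If two elements of $\mathcal{F}_{\angles{\mathcal H}{A}[2]}$ have the same restriction, they agree at some $v\in U(A')\subseteq U(A)$, which is nonempty because $A'\neq\emptyset$. Since $\angles{G}{A}[2]$ is connected, part (i) then forces them to be equal. This injection is the key point, and connectivity of $\angles{G}{A}[2]$ is exactly what makes it work.
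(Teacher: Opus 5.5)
Your proposal is correct and follows essentially the same route as the paper in every part: matching-forced propagation along paths for (i) (the paper phrases it as a contradiction at the first edge of a path where $F$ and $F'$ disagree), the injection $F\mapsto F\cap L(v)$ together with the partition-induced labeling for (iv), the component-wise product bijection for (v), the intersection of the sets $\mathcal{E}_{\mathcal H}(e)$ for (ii), and the restriction map made injective by (i) for (iii). The only differences are the order in which you prove the parts and that your argument for (v) is stated more tersely than the paper's explicit bijection $\phi$.
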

\begin{proof}
\emph{(i)}  Suppose for contradiction that $F\cap L(u)=F'\cap L(u)$ for some $u\in U(A)$.  As $F\neq F'$, there exists $w\in U(A)$ such that $F\cap L(w)\neq F'\cap L(w)$.  Since $\angles{G}{A}[2]$ is connected, there is a $u,w$-path in $\angles{G}{A}[2]$.  This implies there exist adjacent $a,b\in U(A)$ along this path with $F\cap L(a)=F'\cap L(a)$ and $F\cap L(b)\neq F'\cap L(b)$. Let $x$ be the unique element in $F\cap L(a)=F'\cap L(a)$. Because $E_H(L(a),L(b))$ is a matching, $x$ is adjacent to at most one vertex in $L(b)$. Thus at least one of $F,F'$ does not belong to $\mathcal{E}_{\angles{\mathcal{H}}{A}[2]}(ab)$. Since $ab\in A$, Observation~\ref{obs: F_equals_E_intersection}, applied to $\angles{\mathcal{H}}{A}[2]$, implies that at least one of $F,F'$ does not belong to $\mathcal{F}_{\angles{\mathcal{H}}{A}[2]}$, a contradiction.
\\
\\
\emph{(ii)} By Observation~\ref{obs: F_equals_E_intersection}, 
\[
\mathcal{F}_{\angles{\mathcal{H}}{A}} = \bigcap_{uv \in E(\angles{G}{A})}\mathcal{E}_\mathcal{H}(uv) \subseteq \bigcap_{uv \in E(\angles{G}{A'})}\mathcal{E}_\mathcal{H}(uv) = \mathcal{F}_{\angles{\mathcal{H}}{A'}}.
\]
Thus $|\mathcal{F}_{\angles{H}{A'}}| \geq  |\mathcal{F}_{\angles{H}{A}}|$.
\\
\\
\emph{(iii)}   Consider the  map $\phi: \mathcal{F}_{\angles{\mathcal{H}}{A}[2]} \to \mathcal{F}_{\angles{\mathcal{H}}{A'}[2]}$ defined by $\phi(F) = F \cap \bigcup_{v \in U(A')}L(v)$. It is easy to see $\phi$ is a function. By Statement~(\ref{tool:intersection_L(v)}), $\phi$ is injective, and the result follows.
\\
\\
\emph{(iv)} Fix a component $W$ of $\angles{G}{A}[2]$ and suppose, for a contradiction, that $|\mathcal{F}_{\angles{\mathcal{H}}{E(W)}[2]}|>q$. Fix $u\in V(W)$. Since $\mathcal{H}$ is $q$-fold, $|L(u)|=q$. Every element of
$\mathcal{F}_{\angles{\mathcal{H}}{E(W)}[2]}$ is a transversal and therefore contains exactly one vertex of $L(u)$. Thus there exist distinct $F,F'\in\mathcal{F}_{\angles{\mathcal{H}}{E(W)}[2]}$
such that $F\cap L(u)=F'\cap L(u)$. This contradicts Statement~(\ref{tool:intersection_L(v)}) applied with $A= E(W)$. Therefore, $|\mathcal{F}_{\angles{\mathcal{H}}{E(W)}[2]}|\leq q$.
    
Now assume $|\mathcal{F}_{\angles{\mathcal{H}}{E(W)}[2]}|=q$ and write $\mathcal{F}_{\angles{\mathcal{H}}{E(W)}[2]}=\{F_1,\dots,F_q\}$. By Statement~(\ref{tool:intersection_L(v)}),
$F_i\cap L(v)\neq F_j\cap L(v)$ whenever $i\neq j$ and $v\in V(W)$. Since $|L(v)|=q$, it follows that $\{F_1\cap L(v),\dots,F_q\cap L(v)\}$ is a partition of $L(v)$ for every $v\in V(W)$. For $v\in V(W)$ and $x\in L(v)$, define $\lambda(x)$ to be the unique index $i\in[q]$ such that $x\in F_i$. The restriction of $\lambda$ to $L(v)$ is therefore a bijection from $L(v)$ to $[q]$.

Fix $uv\in E(W)$, $c\in L(u)$, and $c'\in L(v)$. If $\lambda(c)=\lambda(c')=i$, then $c,c'\in F_i$, so Observation~\ref{obs: F_equals_E_intersection}, applied to $W$ and its cover $\angles{\mathcal{H}}{E(W)}[2]$, implies that $cc'\in E(H)$. Conversely, suppose that $cc'\in E(H)$ and let $\lambda(c)=i$. Let $d$ be the unique vertex in $F_i\cap L(v)$. Since $\lambda(c)=\lambda(d)=i$, the preceding implication gives $cd\in E(H)$. Because $E_H(L(u),L(v))$ is a matching, $c'=d$, and hence $\lambda(c')=i=\lambda(c)$. Therefore, $cc'\in E(H)$ if and only if $\lambda(c)=\lambda(c')$, so $\lambda$ is a canonical labeling of $\angles{\mathcal{H}}{E(W)}[2]$.

Conversely, assume  $\angles{\mathcal{H}}{E(W)}[2]$ admits a canonical labeling. This allows us to name the vertices of $H_{\angles{}{E(W)}[2]}$  such that for each $v \in V(W)$, $L(v) = \{(v,j) : j \in [q]\}$ and $(u,j)(v,j) \in E(H_{\angles{}{E(W)}[2]})$ whenever $uv \in E(W)$. In particular, for each $j \in [q]$, the set $F_j = \{ (v,j) : v \in V(W) \}$ is a transversal of $\angles{\mathcal{H}}{E(W)}[2]$, and $H_{\angles{}{E(W)}[2]}[F_j]$ is isomorphic to $\angles{G}{E(W)}[2]$. Hence $F_j \in \mathcal{F}_{\angles{\mathcal{H}}{E(W)}[2]}$ for all $j \in [q]$, implying $|\mathcal{F}_{\angles{\mathcal{H}}{E(W)}[2]}| \ge q$. Together with the previous upper bound, this yields $|\mathcal{F}_{\angles{\mathcal{H}}{E(W)}[2]}| = q$.
\\
\\
 \emph{(v)} 
 Let $Z=V(G)- U(A)$. First we assume $k \geq 1$ and $Z \neq \emptyset$. Write $Z= \{z_1, \dots, z_p\}$.  Define the function $\phi: \mathcal{F}_{\angles{\mathcal{H}}{A}} \to (\prod_{i=1}^k \mathcal{F}_{\angles{\mathcal{H}}{E(W_i)}[2]}) \times (\prod_{z \in Z}\{\{x\}: x \in L(z)\})$ by  $\phi(F) = \big( (F_i, \dots, F_k),  (F \cap L(z_1), \dots, F \cap L(z_p)) \big)$ where $F_i = F \cap \big(\bigcup_{v \in V(W_i)}L(v)\big)$.  By Observation~\ref{obs: F_equals_E_intersection}, each $F_i\in\mathcal{F}_{\angles{\mathcal{H}}{E(W_i)}[2]}$, so $\phi$ is a function.

Suppose $F,F'\in\mathcal{F}_{\angles{\mathcal{H}}{A}}$ and $\phi(F)=\phi(F')$. Then $F_i=F_i'$ for every $i\in[k]$ and $F\cap L(z_j)=F'\cap L(z_j)$ for every $j\in[p]$.
Consequently, $F = (\bigcup_{i=1}^kF_i) \cup (\bigcup_{j=1}^p(F \cap L(z_j)))=(\bigcup_{i=1}^kF'_i) \cup (\bigcup_{j=1}^p(F' \cap L(z_j))) = F'$. Thus $\phi$ is injective.

Let $\big((F_1,\dots,F_k),(\{x_{z_1}\},\dots,\{x_{z_p}\})\big)$ be an arbitrary element of the codomain of $\phi$, and define $F=\left(\bigcup_{i=1}^kF_i\right)\cup\left(\bigcup_{j=1}^p\{x_{z_j}\}\right)$. Since $\{V(W_1),\dots,V(W_k), Z\}$ is a partition $V(G)$, the set $F$ is a transversal of $\angles{\mathcal{H}}{A}$.

Let $uv\in A$, and let $W_i$ be the component of $\angles{G}{A}[2]$ containing $u$ and $v$. Then
$F\cap\left(\bigcup_{w\in V(W_i)}L(w)\right)=F_i$. Since $F_i\in\mathcal{F}_{\angles{\mathcal{H}}{E(W_i)}[2]}$, Observation~\ref{obs: F_equals_E_intersection}, applied to $W_i$ and the cover $\angles{\mathcal{H}}{E(W_i)}[2]$, implies that the unique elements of $F\cap L(u)$ and $F\cap L(v)$ are adjacent in $H_{\angles{}{E(W_i)}[2]}$, and hence in $H_{\angles{}{A}}[F]$.

Now suppose $u,v\in V(G)$ are distinct and $uv\notin E(\angles{G}{A})$. By the definition of $H_{\angles{}{A}}$, there is no edge between $L(u)$ and $L(v)$ in $H_{\angles{}{A}}$. Therefore, the map sending each $v\in V(G)$ to the unique element of $F\cap L(v)$ is an isomorphism from $\angles{G}{A}$ to $H_{\angles{}{A}}[F]$. Hence $F\in\mathcal{F}_{\angles{\mathcal{H}}{A}}$ and
$\phi(F)=\big((F_1,\dots,F_k),(\{x_{z_1}\},\dots,\{x_{z_p}\})\big)$.
Thus $\phi$ is surjective.

Since $\phi$ is bijective, $|\mathcal{F}_{\angles{\mathcal{H}}{A}}| = \Bigl(\prod_{i=1}^k \bigl|\mathcal F_{\angles{\mathcal H}{E(W_i)}[2]}\bigr|\Bigr)
\cdot
\Bigl(\prod_{z\in Z} \bigl|\{\{x\}:x\in L(z)\}\bigr|\Bigr)$. Also, since $\bigl|\{\{x\}:x\in L(z)\}\bigr|=|L(z)|=q$ for each $z\in Z$, $\bigl|\mathcal F_{\angles{\mathcal H}{A}}\bigr|
=
q^{n-|U(A)|}\prod_{i=1}^k \bigl|\mathcal F_{\angles{\mathcal H}{E(W_i)}[2]}\bigr|$.  By Statement~(\ref{tool:canon}), for each $i \in [k]$, $\bigl|\mathcal F_{\angles{\mathcal H}{E(W_i)}[2]}\bigr|\le q$. Also, since $\operatorname{comp}(\angles{G}{A}) = n- |U(A)| +k $, we have 
\[
\bigl|\mathcal F_{\angles{\mathcal H}{A}}\bigr|
=
 q^{n-|U(A)|}\prod_{i=1}^k \bigl|\mathcal F_{\angles{\mathcal H}{E(W_i)}[2]}\bigr|
\le
q^{\operatorname{comp}(\angles{G}{A})}.
\]
Moreover, $|\mathcal{F}_{\angles{\mathcal{H}}{A}}|=0$ if and only if there exists some $i \in [k]$ such that $|\mathcal{F}_{\angles{\mathcal{H}}{E(W_i)}[2]}|=0$.  When $k =0$ or $Z = \emptyset$, it is easy to adapt the proof above to obtain the desired result.
\end{proof}    
The following result is helpful in applying Lemma~\ref{lem:toolbox} in the case when $\angles{G}{A}$ is acyclic.
\begin{lem}(\cite{DKM23})\label{lem: acyclic}
    Suppose $\mathcal{H}= (L, H)$ is a full  $q$-fold cover of an acyclic graph $G$. Then $\mathcal{H}$ admits a canonical labeling. 
\end{lem}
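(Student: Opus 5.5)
Since $G$ is acyclic, every component $W$ of $G$ is a tree. We may assume $G$ has at least one edge, since otherwise the canonical labeling is simply any choice of bijections $L(v)\to[q]$. The plan is to build the labeling $\lambda$ one component at a time, because there are no edges between components and no constraints linking them. So fix a component $W$ and root it at a vertex $r$. Choose an arbitrary bijection $\lambda|_{L(r)}\colon L(r)\to[q]$. Then process the vertices of $W$ in breadth-first order from $r$.

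When a vertex $v\neq r$ is reached, it has a unique parent $u$ that has already been labeled. Because $\mathcal{H}$ is full, $E_H(L(u),L(v))$ is a perfect matching, which gives a bijection $\mu_{uv}\colon L(v)\to L(u)$. Define $\lambda(c')=\lambda(\mu_{uv}(c'))$ for each $c'\in L(v)$. This restriction to $L(v)$ is a bijection onto $[q]$, being the composition of two bijections.

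Next we check the canonical condition. Consider any edge $uv\in E(G)$. Since $G$ is a forest, the edge joins a vertex to its parent in the BFS tree of its component; say $u$ is the parent of $v$. For $c\in L(u)$ and $c'\in L(v)$, we have $cc'\in E(H)$ if and only if $c=\mu_{uv}(c')$. By construction $\lambda(c')=\lambda(\mu_{uv}(c'))$, and $\lambda$ is injective on $L(u)$. Hence $c=\mu_{uv}(c')$ holds if and only if $\lambda(c)=\lambda(c')$, which is exactly the required condition. Finally, the sets $L(v)$ are pairwise disjoint, so $\lambda$ is well defined on all of $V(H)$.

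The only real subtlety is consistency: each vertex must be assigned labels through exactly one edge. Acyclicity supplies this, since each non-root vertex has a unique parent and every edge is a parent edge, so no edge can impose a second, conflicting constraint. Apart from that, the argument is routine bookkeeping.
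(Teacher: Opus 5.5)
Your argument is correct. You fix a bijection on $L(r)$ for a root of each component, propagate labels along the perfect matchings $\mu_{uv}$ in BFS order, and use acyclicity to ensure every edge is a parent edge, so no second constraint arises. This is the standard proof of this fact, equivalent to induction on $|V(G)|$ by deleting a leaf and extending the labeling across its matching. The paper itself gives no proof and simply cites \cite{DKM23}, so there is no in-paper argument to compare against.
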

Note that in Statement~(\ref{tool:edgesubset_double}) of Lemma~\ref{lem:toolbox}, the requirement that $\angles{G}{A}[2]$ is connected is necessary. For example, suppose $q \geq 2$ and let $\angles{G}{A}[2]$ be the disjoint union of two edges. If $A'$ is one edge within $A$, then by Lemma~\ref{lem: acyclic}, for any full $q$-fold cover $\mathcal{H} = (L,H)$ of $G$, $|\mathcal{F}_{\angles{\mathcal{H}}{A}[2]}| = q^2$ while $|\mathcal{F}_{\angles{\mathcal{H}}{A'}[2]}| = q$.\\

We next prove that every graph admits a cycle-shattering $q$-fold cover whenever $q$ is sufficiently large.
\begin{lem}\label{lem: shattering_existence}
    Suppose $G$ is a graph with $n$ vertices and $m$ edges. If $q\geq 2^m$, then there exists a full $q$-fold cycle-shattering cover $\mathcal{H}$ of $G$.
    Consequently, if $a_i$ is the number of acyclic spanning subgraphs of $G$ with $i$ components, then 
    \[
        P_{DP}(G,\mathcal{H}) = \sum_{j=0}^{n-1}(-1)^ja_{n-j}q^{n-j}.
    \]
\end{lem}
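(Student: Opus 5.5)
We will construct the cover explicitly with a "voltage" assignment in an abelian group, and then derive the formula from Lemma~\ref{lem: PIE} together with Lemma~\ref{lem:toolbox} and Lemma~\ref{lem: acyclic}.

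\emph{Construction.} Fix an orientation of each edge of $G$ and enumerate the edges $e_1,\dots,e_m$. Since $q\ge 2^m$, we may work in $\Z_q$ and set $\sigma(e_k)=2^{k-1}\in\Z_q$. Let $L(v)=\{(v,j):j\in\Z_q\}$. For each oriented edge $e_k=(u,v)$, join $(u,j)$ to $(v,j+\sigma(e_k))$ for every $j$. This gives a full $q$-fold cover $\mathcal{H}$.

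\emph{Cycle-shattering.} Let $A\subseteq E(G)$ with $\angles{G}{A}$ containing a cycle $C$. By Lemma~\ref{lem:toolbox}(\ref{tool:edgesubset_single}) it suffices to show $|\mathcal{F}_{\angles{\mathcal{H}}{E(C)}}|=0$. Suppose $F$ is an anti-coloring. Then along every edge of $C$ the chosen index shifts by $\pm\sigma(e)$, according to the orientation. Going around $C$ forces
\[
\sum_{e\in E(C)}\epsilon_e 2^{k(e)-1}\equiv 0 \pmod q
\]
for some signs $\epsilon_e\in\{\pm1\}$. This signed sum of distinct powers of $2$ is nonzero, since its largest term exceeds the sum of all the others. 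Its absolute value is at most $2^m-1<q$, so it cannot vanish mod $q$, a contradiction. This parity-free signed-sum argument is the key step. The main subtlety is making sure the bound $|{\rm sum}|<q$ holds, and $q\ge 2^m$ gives exactly that.

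\emph{Counting.} By Lemma~\ref{lem: PIE},
\[
P_{DP}(G,\mathcal{H})=\sum_{A}(-1)^{|A|}|\mathcal{F}_{\angles{\mathcal{H}}{A}}|.
\]
Terms with $\angles{G}{A}$ containing a cycle vanish by cycle-shattering. If $\angles{G}{A}$ is acyclic, each component $W_i$ of $\angles{G}{A}[2]$ is a tree. The subcover $\angles{\mathcal{H}}{E(W_i)}[2]$ is full, so Lemma~\ref{lem: acyclic} gives it a canonical labeling, and Lemma~\ref{lem:toolbox}(\ref{tool:canon}),(\ref{tool:product}) then give $|\mathcal{F}_{\angles{\mathcal{H}}{A}}|=q^{\operatorname{comp}(\angles{G}{A})}$.

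For a forest with $i$ components, $|A|=n-i$. Grouping the acyclic $A$ by $i=n-j$ yields
\[
P_{DP}(G,\mathcal{H})=\sum_{j=0}^{n-1}(-1)^j a_{n-j}q^{n-j},
\]
where $j$ runs up to $n-1$ because $i\ge 1$.
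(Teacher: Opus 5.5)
Your proposal is correct and follows essentially the same route as the paper: the same $\Z_q$-shift cover with shifts $2^{k-1}$, the same signed-sum-of-distinct-powers-of-two contradiction (your ``largest term dominates'' observation plays the role of the paper's appeal to uniqueness of binary representations), and the same inclusion--exclusion count via Lemmas~\ref{lem: acyclic} and~\ref{lem:toolbox}. Reducing to $E(C)$ via Lemma~\ref{lem:toolbox}(\ref{tool:edgesubset_single}) is only a minor streamlining of the paper's direct argument on $\angles{G}{B}$.
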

\begin{proof}
    Throughout this argument numerical quantities will be interpreted as elements of $\Z_q$ or elements of $\Z$ depending on context.  In the first part of the argument, addition is modulo $q$.  Then, for the remainder of the argument we use standard addition.  Suppose the edges of $G$ are $e_1, \dots, e_m$ and the vertices of $G$ are $v_1, \dots, v_n$. We construct a $q$-fold cover $\mathcal{H} = (L, H)$ of $G$ as follows.  For each $v_j \in V(G)$ let $L(v_j) = \{(j,i): i \in \Z_{q}\}$. It remains to specify $E(H)$. 
    
    For each $s\in [m]$, let $\pi_s: \Z_q \to \Z_q$ be the permutation given by $\pi_s(x) = x+ 2^{s-1}$. Note $\pi^{-1}_s(x) = x-2^{s-1}$.  For each $e_j \in E(G)$, if $e_j = v_\alpha v_\beta$ where $\alpha < \beta$, then for each $i \in \Z_q$, add the edge $(\alpha, i)(\beta,\pi_j(i))$ to $E(H)$. Hence $\mathcal{H}$ is a full $q$-fold cover of $G$ and this completes the construction of $H$.

    For the sake of contradiction, suppose there exists $B\subseteq E(G)$ such that $\angles{G}{B}$ contains a cycle and $|\mathcal{F}_{\angles{\mathcal{H}}{B}}|>0$. Choose a cycle $C$ in $\angles{G}{B}$. Suppose the vertices of $C$ in cyclic order are $v_{b_1},v_{b_2},\dots,v_{b_k}$, where $k\leq m$. By cyclic order we mean $V(C)=\{v_{b_1},v_{b_2},\dots,v_{b_k}\}$ and $E(C)=\{v_{b_i}v_{b_{i+1}}:i\in[k]\}$, where $b_{k+1}=b_1$. For each $i\in[k]$, let $a_i\in[m]$ be such that $e_{a_i}=v_{b_i}v_{b_{i+1}}$. Since the edges of $C$ are distinct, $a_1,\dots,a_k$ are pairwise distinct.

    Suppose $F$ is an anti-$\angles{\mathcal{H}}{B}$-coloring of $\angles{G}{B}$. For each $i\in[k]$, let $c_i\in\Z_q$ be such that $(b_i,c_i)$ is the unique element of $F\cap L(v_{b_i})$, and let $c_{k+1}=c_1$. By Observation~\ref{obs: F_equals_E_intersection}, applied to $\angles{G}{B}$ and its cover $\angles{\mathcal{H}}{B}$, the vertices $(b_i,c_i)$ and $(b_{i+1},c_{i+1})$ are adjacent in $H_{\angles{}{B}}$, and hence in $H$, for every $i\in[k]$. Thus $c_{i+1}=\pi_{a_i}(c_i)$ if $b_i<b_{i+1}$ and $c_{i+1}=\pi^{-1}_{a_i}(c_i)$ if $b_i>b_{i+1}$ for each $i \in [k]$.  For each $i\in[k]$, let $d_i=1$ if $b_i<b_{i+1}$ and $d_i=-1$ if $b_i>b_{i+1}$.  Since $\pi_{a_i}(x)=x+2^{a_i-1}$ and $\pi_{a_i}^{-1}(x)=x-2^{a_i-1}$, it follows for each $i \in [k]$ that  $c_{i+1} = c_i + d_i2^{a_i-1}$.  These equations imply 
    \[
        c_{k+1}=c_1+\sum_{i=1}^k d_i2^{a_i-1}.
    \]
    Since $c_{k+1}=c_1$, we have $\sum_{i=1}^k d_i2^{a_i-1}=0$.

    This means that if we interpret $d_i2^{a_i-1}$ as an integer for each $i \in [k]$ and use standard addition, $\sum_{i=1}^k d_i2^{a_i-1}$ must be divisible by $q$.  We will use this fact to obtain a contradiction.
    
    For the remainder of the proof we use standard addition. Since $d_i\in \{-1,1\}$ for every $i\in[k]$, $|\sum_{i=1}^k d_i2^{a_i-1}| \leq \sum_{i=1}^k 2^{a_i-1}$.  Since the indices $a_1,\ldots,a_k$ are pairwise distinct elements of $[m]$, \\ $2^{a_1-1},\ldots,2^{a_k-1}$ are pairwise distinct as well. Therefore, since $q \geq 2^m$,
    \[
        \left|\sum_{i=1}^k d_i2^{a_i-1}\right| \leq \sum_{i=1}^k 2^{a_i-1}\leq \sum_{s=1}^m 2^{s-1} =2^m-1<q.
    \]
    Thus $\sum_{i=1}^k d_i2^{a_i-1}$ is divisible by $q$ and satisfies $|\sum_{i=1}^k d_i2^{a_i-1}|<q$, which means $\sum_{i=1}^k d_i2^{a_i-1}=0$.  However, this is impossible, since moving the negative terms of $\sum_{i=1}^k d_i2^{a_i-1}$ to the right side of this equation implies there is a nonnegative integer with two distinct binary representations. Thus for any $A\subseteq E(G)$ such that $\angles{G}{A}$ contains a cycle, $|\mathcal F_{\angles{\mathcal H}{A}}|=0$.

   By Lemma~\ref{lem: PIE}, only the terms corresponding to sets $A\subseteq E(G)$ for which $\angles{G}{A}$ is acyclic can be nonzero. For every such $A$, the cover $\angles{\mathcal{H}}{A}$ is full, so Lemmas~\ref{lem: acyclic} and \ref{lem:toolbox}(\ref{tool:canon}, \ref{tool:product}) imply that $|\mathcal{F}_{\angles{\mathcal{H}}{A}}|=q^{\operatorname{comp}(\angles{G}{A})}$. Since an acyclic spanning subgraph with $n-j$ components has $j$ edges, grouping the terms in Lemma~\ref{lem: PIE} gives
    \[
    P_{DP}(G,\mathcal{H})
    =
    \sum_{j=0}^{n-1}(-1)^ja_{n-j}q^{n-j}.
    \]
\end{proof}

\section{Bipartite Graphs}\label{bipartite}
In this section we begin by proving Theorem~\ref{thm: bipartite}.  To prove Theorem~\ref{thm: bipartite} for a bipartite graph $G$ we show that for sufficiently large $q$, if $\mathcal{H}$ is a cycle-shattering cover of $G$, then $P_{DP}(G,\mathcal{H})=P_{DP}(G,q)$.

\begin{customthm} {\bf \ref{thm: bipartite}}
    Suppose $G$ is a bipartite graph with $n$ vertices and $m$ edges . If $q \geq m + 2^m$ and $a_i$ is the number of acyclic spanning subgraphs of $G$ with $i$ components, then 
    \[
        P_{DP}(G,q) = \sum_{j=0}^{n-1}(-1)^ja_{n-j}q^{n-j}.
    \]
\end{customthm}
\begin{proof}
    Suppose $q \geq m + 2^m$. By Lemma~\ref{lem: shattering_existence}, there exists a cycle-shattering, full $q$-fold cover $\mathcal H_S=(L_S,H_S)$ of $G$. We claim that $P_{DP}(G,q) = P_{DP}(G, \mathcal{H}_S)$. It suffices to prove that $P_{DP}(G, \mathcal{H}) - P_{DP}(G, \mathcal{H}_S) \geq 0$ for an arbitrary full $q$-fold cover $\mathcal{H} = (L,H)$ of $G$.  By Lemma~\ref{lem: PIE}, $P_{DP}(G, \mathcal{H}) - P_{DP}(G, \mathcal{H}_S) = \Delta\mathcal{H}$, where $\Delta\mathcal{H}$ denotes the sum 
    \begin{equation}\label{eq:DP-difference}
         \sum_{A \subseteq E(G)} (-1)^{|A|}\big( |\mathcal{F}_{\angles{\mathcal{H}}{A}}| - |\mathcal{F}_{\angles{\mathcal{H}_S}{A}}|\big).
    \end{equation}
    Let $\mathcal{B} = \{A \subseteq E(G) : \angles{G}{A} \text{ contains a cycle}\}$. If $A \notin \mathcal{B}$, then $\angles{G}{A}$ is acyclic, and  Lemmas~\ref{lem:toolbox}(\ref{tool:canon},\ref{tool:product}) and~\ref{lem: acyclic} imply $|\mathcal F_{\angles{\mathcal H}{A}}|=q^{\operatorname{comp}(\angles{G}{A})} =|\mathcal F_{\angles{\mathcal H_S}{A}}|$. Thus the term corresponding to $A$ in~\eqref{eq:DP-difference} is zero. If $A\in\mathcal B$, then $\angles{G}{A}$ contains a cycle and $|\mathcal F_{\angles{\mathcal{H}_S}{A}}|=0$. So,
    \begin{align*}
     \Delta\mathcal{H} 
     &= \sum_{A\in\mathcal{B}}(-1)^{|A|}\left(|\mathcal {F}_{\angles{\mathcal{H}}{A}}|-|\mathcal F_{\angles{\mathcal{H}_S}{A}}|\right)\\
     &= \sum_{A\in\mathcal{B}}(-1)^{|A|}|\mathcal {F}_{\angles{\mathcal{H}}{A}}|
    \end{align*}
    Clearly, if $|\mathcal F_{\angles{\mathcal H}{A}}|=0$ for every $A \in \mathcal{B}$, then  $\Delta\mathcal{H}=0$ and we are done. Hence we may assume there exists an $A\in\mathcal{B}$ and a cycle $C\subseteq\angles{G}{A}$ where $|\mathcal F_{\angles{\mathcal{H}}{A}}|\geq1$. Since $E(C)\subseteq A$, Lemma~\ref{lem:toolbox}(\ref{tool:edgesubset_single}) gives $|\mathcal F_{\angles{\mathcal H}{E(C)}}|\geq|\mathcal F_{\angles{\mathcal H}{A}}|\geq1$. Let $\ell$ be the minimum length among all such cycles. Since $G$ is bipartite, $\ell = 2k$ for some $k \geq 2$.  

    Let $\mathcal{A} = \{A \in \mathcal{B} : |\mathcal{F}_{\angles{\mathcal{H}}{A}}|\geq 1\}$. Then~\eqref{eq:DP-difference} reduces to $ \Delta\mathcal{H} = \sum_{A \in \mathcal{A}} (-1)^{|A|} |\mathcal{F}_{\angles{\mathcal{H}}{A}}|$.  We claim that if $A \in \mathcal{A}$, then every cycle in $\angles{G}{A}$ has length at least $2k$. Suppose to the contrary that there exists a cycle $C' \subseteq \angles{G}{A}$ with $|E(C')|< 2k$. Since $E(C') \subseteq A$, Lemma~\ref{lem:toolbox}(\ref{tool:edgesubset_single}) gives $|\mathcal{F}_{\angles{\mathcal{H}}{A}}| \leq |\mathcal{F}_{\angles{\mathcal{H}}{E(C')}}|$. However, the minimality of $2k$ implies $|\mathcal{F}_{\angles{\mathcal{H}}{E(C')}}|=0$ and therefore $|\mathcal{F}_{\angles{\mathcal{H}}{A}}|=0$, contradicting $A \in \mathcal{A}$.
    
    Thus for every $A \in \mathcal{A}$, every cycle in $\angles{G}{A}$ has length at least $2k$. Therefore $1 \leq \operatorname{comp}({\angles{G}{A}}) \leq n-2k+1$ since there exists at least one component containing at least $2k$ vertices. We now partition $\mathcal{A}$ according to the value of $n - \operatorname{comp}(\angles{G}{A})$. For each $i\in [2k-1:n-1]$, let $\mathcal A_i = \{ A \in \mathcal{A}: \operatorname{comp}(\angles{G}{A})=n-i \}$ and 
    $\Delta S_i = \sum_{A\in\mathcal A_i} (-1)^{|A|} |\mathcal F_{\angles{\mathcal H}{A}}|$.  
    Then we have
    \[ 
    \Delta \mathcal{H} = \sum_{i=2k-1}^{n-1} \Delta S_i. 
    \]
    To finish our proof that $\Delta \mathcal{H} \geq 0$, we determine $\Delta S_{2k-1}$ exactly, give a careful lower bound on $\Delta S_{2k}$ (when $n > 2k$), and give a rough lower bound on any remaining terms.
    
    Consider $\Delta S_{2k-1}$. Let $A\in \mathcal A_{2k-1}$. Then $n-\operatorname{comp}(\angles{G}{A})=2k-1$.  Since $A \in \mathcal{A}$, $\angles{G}{A}$ contains a cycle and every cycle in $\angles{G}{A}$ has length at least $2k$. Thus some component $W$ of $\angles{G}{A}$ contains a cycle and at least $2k$ vertices. Since $n-\operatorname{comp}(\angles{G}{A})=2k-1$, this implies $|V(W)| = 2k$ and any other components of $\angles{G}{A}$ are isolated vertices.  Thus $W$ contains a cycle of length exactly $2k$.   Moreover, $W$ has no additional edges, since any chord of this $2k$-cycle would create a shorter cycle in $\angles{G}{A}$. Therefore $A$ is the edge set of a $2k$-cycle. Write $\rho_{2k}(G,\mathcal H)=\sum_{A\in\mathcal{A}_{2k-1}}|\mathcal F_{\angles{\mathcal H}{A}[2]}|$ and note $\rho_{2k}(G,\mathcal H)>0$. By Lemma~\ref{lem:toolbox}(\ref{tool:product}),
    \begin{align*}
        \Delta S_{2k-1}
        &=
        \sum_{A\in\mathcal A_{2k-1}}
        (-1)^{|A|}
        |\mathcal F_{\angles{\mathcal H}{A}}| \\
        &=
        \sum_{A\in\mathcal A_{2k-1}}
        |\mathcal F_{\angles{\mathcal H}{A}}| \\
        &=
        \sum_{A\in\mathcal A_{2k-1}}
        |\mathcal F_{\angles{\mathcal H}{A}[2]}|q^{n-2k} \\
        &=
        q^{n-2k}\rho_{2k}(G,\mathcal H).
    \end{align*}

    Now, suppose $n > 2k$, and consider $\Delta S_{2k}$.  Let $\mathcal{A}_{2k}^O$ and $\mathcal{A}_{2k}^E$ consist of the elements of $\mathcal{A}_{2k}$ with odd size and even size respectively.  Then,
     \begin{align*}
     \sum_{A \in \mathcal{A}_{2k}} (-1)^{|A|} |\mathcal{F}_{\angles{\mathcal{H}}{A}}|  
     &= \sum_{A \in \mathcal{A}_{2k}^E} (-1)^{|A|} |\mathcal{F}_{\angles{\mathcal{H}}{A}}| + \sum_{A \in \mathcal{A}_{2k}^O} (-1)^{|A|} |\mathcal{F}_{\angles{\mathcal{H}}{A}}|\\
     &\geq \sum_{A \in \mathcal{A}_{2k}^O} (-1)^{|A|} |\mathcal{F}_{\angles{\mathcal{H}}{A}}|.
    \end{align*}    
    Now, we will bound $\sum_{A \in \mathcal{A}_{2k}^O} (-1)^{|A|} |\mathcal{F}_{\angles{\mathcal{H}}{A}}|$ from below. 
    
    Suppose $A\in\mathcal A_{2k}$. Then $n-\operatorname{comp}(\angles{G}{A})=2k$. Since $A \in \mathcal{A}$, every cycle in $\angles{G}{A}$ has length at least $2k$ and $\angles{G}{A}$ contains a cycle. This implies exactly one component of $\angles{G}{A}$ contains a cycle. Indeed, if two components contained cycles, then each would have at least $2k$ vertices, so $\operatorname{comp}(\angles{G}{A}) \leq n- 2(2k-1)$, meaning $n-\operatorname{comp}(\angles{G}{A}) \geq 4k-2 > 2k$, and contradicting $n-\operatorname{comp}(\angles{G}{A})=2k$.

    Let $W$ be the unique component of $\angles{G}{A}$ containing a cycle. Since every cycle in $\angles{G}{A}$ has length at least $2k$, we have $|V(W)|\geq 2k$. Also, $n-2k = \operatorname{comp}(\angles{G}{A}) \leq n-|V(W)|+1$, which implies $|V(W)| \leq 2k+1$.  Hence $|V(W)| \in \{2k, 2k+1\}$.
    
    If $|V(W)|=2k$, then $W$ is a $2k$-cycle. Let $R ={\angles{G}{A}}- V(W)$. Then $R$ is acyclic, $|V(R)| =n-2k$, and $\operatorname{comp}(R) = n-2k-1$. Hence $R$ consists of one $K_2$ and any other components of $\angles{G}{A}$ are isolated vertices.  Therefore, in the case $|V(W)|=2k$, the nontrivial components of $\angles{G}{A}$ are exactly one $C_{2k}$ and one $K_2$. This means $|A| = 2k+1$ and $A \in \mathcal{A}^O_{2k}$.
    
    Now suppose $|V(W)|=2k+1$. If $\angles{G}{A} \neq W$, let $R ={\angles{G}{A}}-V(W)$. Then $R$ is acyclic, $|V(R)| =n-(2k+1)$, and $\operatorname{comp}(R) = n-2k-1$.  Hence $R$ is edgeless, and $\angles{G}{A}$ has only $W$ as a nontrivial component.  Since $G$ is bipartite, $W$ contains no cycle of length $2k+1$. Thus $W$ contains a cycle $C$ of length $2k$ together with one additional vertex $x$. The cycle $C$ has no chord, since any chord would create a cycle in $\angles{G}{A}$ of length less than $2k$.  We will now determine $W$ when $\deg_W(x)=1$ and when $\deg_W(x) \geq 2$. 

    Suppose $\deg_W(x)=1$.  Then $W$ is obtained from $C$ by adding the vertex $x$ and one edge with exactly one endpoint in $V(C)$. This means $|A| = 2k+1$ and $A \in \mathcal{A}^O_{2k}$.
    
    Now, suppose $\deg_W(x)\geq 2$ and $u,v$ are distinct neighbors of $x$ in $V(C)$. Since $u$ and $v$ are both adjacent to $x$, they lie in the same part of the bipartition of $W$. Hence the two $u,v$-paths contained in $C$ have even length. Suppose their lengths are $2r$ and $2k-2r$ where $r\in [k-1]$. So, $W$ contains cycles of lengths $2r+2$ and $2k-2r+2$.  Since every cycle in $\angles{G}{A}$ has length at least $2k$, both of these cycle lengths are at least $2k$. Hence  
    \[
    2r+2\geq 2k
    \qquad\text{and}\qquad
    2k-2r+2\geq 2k,
    \]
    which means $r\geq k-1$ and $r\leq 1$. Since $1\leq r\leq k-1$, it follows that $k=2$. Thus, it is only possible that $\deg_W(x) \geq 2$ when $C$ is a $4$-cycle. Moreover, when there is such an $x$, $\deg_W(x) = 2$ and $W=K_{2,3}$. This means $|A| = 6$ and $A \in \mathcal{A}^E_{2k}$.

    Thus, if $A\in\mathcal{A}_{2k}^O$, then there is a $2k$-cycle $C$ and edge $e\in E(G) - E(C)$ such that $A=E(C)\cup \{e\}$, where $e$ has at most one endpoint in $V(C)$. Since $E(C) \subseteq A$ and $A \in \mathcal{A}_{2k}$, by Lemma~\ref{lem:toolbox}(\ref{tool:edgesubset_single}), $1\leq |\mathcal{F}_{\angles{\mathcal{H}}{A}}| \leq |\mathcal{F}_{\angles{\mathcal{H}}{E(C)}}|$; so $E(C) \in \mathcal{A}_{2k-1}$.  Now, we will establish a bijection between $\mathcal{A}^O_{2k}$ and a set related to $\mathcal{A}_{2k-1}$ in order to obtain a useful formula for $(-1)^{|A|} |\mathcal{F}_{\angles{\mathcal{H}}{A}}|$ whenever $A \in \mathcal{A}_{2k}^O$. 
    
    For each $A_0 \in \mathcal{A}_{2k-1}$, let $\mathcal{E}(A_0)= \{xy\in E(G) - A_0: |\{x,y\} \cap U(A_0)|\leq 1\}$. Let $\phi: \mathcal{A}_{2k}^O \to \bigcup_{A_0 \in \mathcal{A}_{2k-1}} (\{A_0\} \times \mathcal{E}(A_0))$ be the function that maps each $A \in \mathcal{A}^{O}_{2k}$ to $(E(C_A),e_A)$ where $C_A$ is the $2k$-cycle in $\angles{G}{A}$ and $e_A$ is the edge in $E(G) - E(C_A)$ satisfying $A=E(C_A)\cup \{e_A\}$.  
    By the preceding paragraph $\phi$ is a function and it is easy to prove it is also a bijection.

    Suppose $A \in \mathcal{A}_{2k}^O$ and $\phi(A) = (E(C_A),e_A)$.  Since $A = E(C_A) \cup \{e_A\}$, we have
    \[
        (-1)^{|A|} |\mathcal{F}_{\angles{\mathcal{H}}{A}}|=-|\mathcal{F}_{\angles{\mathcal{H}}{E(C_A)\cup\{e_A\}}}|.
    \]
    
    We claim for each $A \in \mathcal{A}_{2k}^O$ that $|\mathcal{F}_{\angles{\mathcal{H}}{{E(C_A)\cup\{e_A\}}}}| \leq q^{n-2k-1}|\mathcal{F}_{\angles{\mathcal{H}}{E(C_A)}[2]}|$. To prove this, fix $B \in \mathcal{A}_{2k}^O$. Write $e_B =xy$.  First suppose $\{x,y\} \cap V(C_B) = \varnothing$. Then $\angles{G}{B}[2]$ has as its components $C_B$ and the $K_2$ whose edge is $e_B$. Also, since $|U(B)| = 2k+2$, Lemmas~\ref{lem:toolbox}(\ref{tool:canon},\ref{tool:product}) and~\ref{lem: acyclic} give 
    \[
        |\mathcal{F}_{\angles{\mathcal{H}}{{E(C_B)\cup\{e_B\}}}}|
        =
        q^{n-(2k+2)}
        |\mathcal{F}_{\angles{\mathcal{H}}{E(C_B)}[2]}|\cdot|\mathcal{F}_{\angles{\mathcal{H}}{\{e_B\}}[2]}|
        =
        q^{n-2k-1}
        |\mathcal F_{\angles{\mathcal H}{E(C_B)}[2]}|.
    \]
    
    Next, suppose without loss of generality that $x\in V(C_B)$ and $y\notin V(C_B)$. Then $\angles{G}{B}[2]$ is connected and consists of the $2k$-cycle $C_B$ and the degree one vertex $y$ with unique neighbor $x$. Since $E(C_B)\subseteq B$ and $\angles{G}{B}[2]$ is connected, Lemma~\ref{lem:toolbox}(\ref{tool:edgesubset_double}) gives $|\mathcal{F}_{\angles{\mathcal{H}}{E(C_B)\cup\{e_B\}}[2]}|\leq |\mathcal F_{\angles{\mathcal{H}}{E(C_B)}[2]}|$. Also, since $|U(B)| = 2k+1$, Lemma~\ref{lem:toolbox}(\ref{tool:canon},\ref{tool:product}) gives
    \[
        |\mathcal{F}_{\angles{\mathcal{H}}{{E(C_B)\cup\{e_B\}}}}|
        =
        q^{n-(2k+1)}
        |\mathcal{F}_{\angles{\mathcal{H}}{E(C_B)\cup\{e_B\}}[2]}|
        \leq 
        q^{n-2k-1}
        |\mathcal F_{\angles{\mathcal H}{E(C_B)}[2]}|.
    \]

    Consequently, for each $A \in \mathcal{A}_{2k}^O$, $|\mathcal{F}_{\angles{\mathcal{H}}{{E(C_A)\cup\{e_A\}}}}| \leq q^{n-2k-1}|\mathcal{F}_{\angles{\mathcal{H}}{E(C_A)}[2]}|$. Also, since $|\mathcal{E}(A_0)| \leq m-2k$ for each $A_0 \in \mathcal{A}_{2k-1}$, we may write
    \begin{align*}
    \sum_{A\in\mathcal{A}_{2k}^O}(-1)^{|A|}
    |\mathcal{F}_{\angles{\mathcal{H}}{A}}|
    &=-\sum_{A_0\in\mathcal{A}_{2k-1}}\sum_{e\in \mathcal{E}(A_0)}|\mathcal{F}_{\angles{\mathcal{H}}{A_0\cup\{e\}}}|\\
    &\geq -\sum_{A_0\in\mathcal{A}_{2k-1}}\sum_{e\in \mathcal{E}(A_0)} q^{n-2k-1}
    |\mathcal F_{\angles{\mathcal H}{A_0}[2]}|\\
    &\geq -(m-2k)\sum_{A_0\in\mathcal{A}_{2k-1}} q^{n-2k-1}
    |\mathcal F_{\angles{\mathcal H}{A_0}[2]}|\\
    &= -(m-2k)q^{n-2k-1} \rho_{2k}(G,\mathcal H),
    \end{align*}
    and it follows that $\Delta S_{2k} \geq-(m-2k)\rho_{2k}(G,\mathcal H)q^{n-2k-1}$.

    Finally, if $n > 2k+1$, we bound the terms $\Delta S_i$ for each $i\in [2k+1: n-1]$.  Notice that for each $i \in [2k+1:n-1]$ and $A \in \mathcal{A}_i$, Lemma~\ref{lem:toolbox}(\ref{tool:canon},\ref{tool:product}) implies $|\mathcal{F}_{\angles{\mathcal{H}}{A}}|\leq q^{\operatorname{comp}(\angles{G}{A})} \leq q^{n-2k-1}$.   Since the total number of subsets of $E(G)$ is $2^m$, it follows that 
    \[ 
        \sum_{i=2k+1}^{n-1}\Delta S_i \geq -2^m q^{n-2k-1}. 
    \]

    Bringing all this together, we obtain
    \begin{align*}
        P_{DP}(G,\mathcal H)-P_{DP}(G,\mathcal H_S)
        &=
        \sum_{i=2k-1}^{n-1}\Delta S_i \\
        &\geq
        q^{n-2k}\rho_{2k}(G,\mathcal H)
        -(m-2k)q^{n-2k-1}\rho_{2k}(G,\mathcal H)
        -2^m q^{n-2k-1} \\
        &=
        q^{n-2k-1}
        \left(
            \bigl(q-(m-2k)\bigr)\rho_{2k}(G,\mathcal H)
            -2^m
        \right).
    \end{align*}
    One can note that this lower bound holds even when $n \in \{2k,2k+1\}$.  Finally, since $\rho_{2k}(G,\mathcal H)\geq 1$ and $q\geq m + 2^m$,
    \[
        \bigl(q-(m-2k)\bigr)\rho_{2k}(G,\mathcal H)-2^m
        \geq q-(m-2k)-2^m
        \geq 0.
    \]
    Consequently,
    \[
        P_{DP}(G,\mathcal{H})-P_{DP}(G,\mathcal{H}_S)\geq 0.
    \]
    Since $\mathcal{H}$ was an arbitrary $q$-fold full cover, $\mathcal H_S$ minimizes the number of $\mathcal{H}$-colorings among all $q$-fold covers of $G$. Hence $P_{DP}(G,q)=P_{DP}(G,\mathcal H_S)$. Since $\mathcal{H}_S$ is a cycle-shattering cover, Lemma~\ref{lem: shattering_existence} implies  $P_{DP}(G,q) = \sum_{j=0}^{n-1}(-1)^ja_{n-j}q^{n-j}$.
\end{proof}
Thus, the DP color function of a bipartite graph is eventually polynomial.  As noted in the introduction, it would be interesting to determine how low one could take the bound on $q$ in Theorem~\ref{thm: bipartite}.  As a starting point for the study of such an optimal lower bound, we determine an exact formula for $P_{DP}(K_{2,n},q)$ whenever $q \in \N$.  To do that, we first need the following lemma.

\begin{lem}\label{lem:min}
    Let $q,n,k\in\N$ with $q\geq 3$. For any $s\in [0: nk]$, among all  $(a_1, \dots, a_k) \in [0:n]^k$ satisfying $\sum_{\ell=1}^ka_\ell=s$, the sum  $\sum_{\ell=1}^k (q-1)^{a_\ell}(q-2)^{n-a_\ell}$ is minimized if and only if $|a_i-a_j|\leq 1$ for all $i,j\in[k]$.
\end{lem}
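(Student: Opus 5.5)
The plan is to write the summand as a function of $a$ alone. Set $f(a) = (q-1)^{a}(q-2)^{n-a} = (q-2)^n r^{a}$ with $r = \frac{q-1}{q-2} > 1$; here $q\geq 3$ is used, so that $q-2\geq 1$ and $r$ is well defined. Then $f$ is strictly convex on the integers, in the sense that its successive differences $d(a) = f(a+1)-f(a) = (q-2)^n r^{a}(r-1)$ are strictly increasing in $a$. The statement is then the standard fact that a sum of a strictly convex function over integer vectors with fixed total is minimized exactly at the balanced vectors. The two parts of the argument are a smoothing step and an identification of the balanced vectors.

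For the smoothing step, take any $(a_1,\dots,a_k)\in[0:n]^k$ with $\sum_\ell a_\ell = s$ and some pair with $a_i \geq a_j + 2$. Replace $a_i$ by $a_i-1$ and $a_j$ by $a_j+1$. Both new entries still lie in $[0:n]$ because $a_j+1 \leq a_i - 1 \leq n-1$, and the total is unchanged. The change in the objective is
\[
-\bigl(f(a_i)-f(a_i-1)\bigr) + \bigl(f(a_j+1)-f(a_j)\bigr) = d(a_j) - d(a_i-1),
\]
which is strictly negative since $a_j < a_i-1$ and $d$ is strictly increasing. So no vector with $|a_i-a_j|\geq 2$ for some pair can be a minimizer. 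This proves the "only if" direction, and it uses that the feasible set is finite and nonempty, so a minimizer exists. Nonemptiness holds because $s\in[0:nk]$.

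For the "if" direction, I will show that all balanced vectors give the same value, so each of them attains the minimum. If $|a_i-a_j|\leq 1$ for all $i,j$, then every $a_\ell$ lies in $\{p, p+1\}$ for some $p$. Writing $s = kp' + r'$ with $0\le r'<k$, the multiset of entries must be exactly $r'$ copies of $p'+1$ and $k-r'$ copies of $p'$, because the sum pins down how many entries equal $p+1$, and that in turn forces $p=p'$. The objective therefore depends only on this multiset and is the same for every balanced vector. Since a minimizer exists and must be balanced, that common value is the minimum, and every balanced vector attains it.

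There is no real obstacle here. The only points needing care are checking that the smoothing move keeps entries in $[0:n]$, and the small bookkeeping showing that the balanced multiset is unique. Strictness of the inequality, which the "if and only if" requires, comes directly from $r>1$, and this is exactly where the hypothesis $q\ge 3$ enters.
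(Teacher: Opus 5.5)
Your proof is correct and follows essentially the same route as the paper. Both arguments make a one-unit transfer from $a_i$ to $a_j$ and show the sum strictly decreases: you phrase this as strict convexity of $f(a)=(q-2)^n r^{a}$, while the paper factors the difference directly as $(q-1)^{b_j}(q-2)^{n-b_i}\bigl((q-1)^{b_i-b_j-1}-(q-2)^{b_i-b_j-1}\bigr)$. Both then observe that all balanced tuples share the same multiset of entries, so each attains the minimum that some (necessarily balanced) minimizer achieves.
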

\begin{proof}
    Note the result is obvious when $k=1$. Thus we may assume that $k \geq 2$.  We call a tuple $\mathbf{x}=(x_1,\ldots,x_k)\in[0:n]^k$ \emph{balanced} if $|x_i-x_j|\leq 1$ for all $i,j\in[k]$.

    Fix some $s\in [0: nk]$. For the forward direction, we proceed via contradiction. Suppose that $\mathbf{b}=(b_1,\ldots,b_k)$ minimizes the indicated sum among all tuples in $[0:n]^k$ whose coordinates sum to $s$, and suppose that $\mathbf{b}$ is not balanced. Thus there exist $i,j \in [k]$ such that $b_i \geq b_j+2$.  Let $\mathbf{b}' = (b_1',\dots,b_k') \in [0:n]^k$, where
    \[
        b'_\ell=
        \begin{cases}
            b_\ell & \text{ if } \ell \notin \{i,j\} \\
            b_i-1 & \text{ if } \ell = i\\
            b_j+1 & \text{ if } \ell = j.
        \end{cases}
    \]
    Note that the sum of the coordinates of $\textbf{b}'$ is $s$.  To obtain our contradiction, we will show that $\sum_{\ell=1}^k (q-1)^{b_\ell}(q-2)^{n-b_\ell} > \sum_{\ell=1}^k (q-1)^{b'_\ell}(q-2)^{n-b'_\ell}$. 
    \begin{align*}
        &\sum_{\ell=1}^k (q-1)^{b_\ell}(q-2)^{n-b_\ell}
        - \sum_{\ell=1}^k (q-1)^{b'_\ell}(q-2)^{n-b'_\ell} \\
        &= (q-1)^{b_i-1}(q-2)^{n-b_i}
             - (q-1)^{b_j}(q-2)^{n-b_j-1} \\
        &= (q-1)^{b_j}(q-2)^{n-b_i}
            \left((q-1)^{b_i-b_j-1}-(q-2)^{b_i-b_j-1}\right).
    \end{align*}
    Since $b_i-b_j-1\geq 1$ and $q-1>q-2\geq 1$, we have $\sum_{\ell=1}^k (q-1)^{b_\ell}(q-2)^{n-b_\ell} > \sum_{\ell=1}^k (q-1)^{b'_\ell}(q-2)^{n-b'_\ell}$, contradicting the minimality of $\mathbf{b}$. Therefore the elements in $ [0:n]^k$ whose coordinates sum to $s$ and minimize the desired sum are balanced.
    
    Conversely, suppose $\mathbf{b} =(b_1, \dots, b_k) \in [0:n]^k$ is balanced and $\sum_{\ell=1}^k b_\ell=s$.  Let $\mathbf{c} =(c_1, \dots, c_k) \in [0:n]^k$ be a minimizer of $\sum_{\ell=1}^k (q-1)^{a_\ell}(q-2)^{n-a_\ell}$ among all tuples $(a_1, \dots, a_k) \in [0:n]^k$ whose coordinates sum to $s$. By the forward direction, $\mathbf{c}$ is balanced. We claim that $\sum_{\ell=1}^k (q-1)^{b_\ell}(q-2)^{n-b_\ell} = \sum_{\ell=1}^k (q-1)^{c_\ell}(q-2)^{n-c_\ell}$, and thus $\mathbf{b}$ also minimizes the desired sum.
    
    Write $s=kp+r$, where $p\in[0:n]$ and $r \in [0: k-1]$. An easy argument can be used to show that every balanced tuple in $[0:n]^k$ whose coordinates sum to $s$ has exactly $r$ coordinates equal to $p+1$ and $k-r$ coordinates equal to $p$.  
    So, both $\mathbf{b}$ and $\mathbf{c}$ have exactly $r$ coordinates equal to $p+1$ and exactly $k-r$ coordinates equal to $p$.  It follows that
    \[
    \sum_{\ell=1}^k (q-1)^{b_\ell}(q-2)^{n-b_\ell}
    =
    \sum_{\ell=1}^k (q-1)^{c_\ell}(q-2)^{n-c_\ell}, 
    \]
    as desired.
\end{proof}

Now we prove Theorem~\ref{thm:k2n} which we restate.
\begin{customthm} {\bf \ref{thm:k2n}}
Suppose $q, n \in \N$ and that $p$ and $r$ are nonnegative integers satisfying $qn=q^{2}p+r$ where $p = \lfloor n/q \rfloor$. Then $P_{DP}(K_{2,n},q) = \sum_{i=1}^{q^{2}}(q-1)^{a_{i}}(q-2)^{n-a_{i}}$ where, for each $j \in [q^2]$,
\[
    a_j=
    \begin{cases}
    p+1 & \text{ if } j \leq r \\
    p & \text{ if } j > r.
    \end{cases}
\]
\end{customthm}
\newcommand{\KTwoFourFigureWidth}{0.8\linewidth}
\begin{figure}[!t]
\centering
\resizebox{\KTwoFourFigureWidth}{!}{%
\begin{tikzpicture}[
    fiber/.style={
        draw=black!45,
        fill=gray!18,
        ellipse,
        minimum width=1.16cm,
        minimum height=1.90cm,
        inner sep=0pt,
        line width=0.45pt
    },
    vtx/.style={
        circle,
        draw=black!70,
        fill=white,
        minimum size=4.8mm,
        inner sep=0pt,
        font=\scriptsize
    },
    ed/.style={
        draw=black!42,
        line width=0.34pt,
        line cap=round
    },
    bolded/.style={
        draw=black,
        line width=1.15pt,
        line cap=round
    },
    dasheded/.style={
        draw=black,
        line width=1.00pt,
        dash pattern=on 3.2pt off 2.0pt,
        line cap=round,
        preaction={
            draw=white,
            line width=1.65pt,
            line cap=round
        }
    },
    tablebold/.style={
        draw=black,
        line width=0.95pt
    },
    tabledash/.style={
        draw=black,
        line width=0.95pt,
        dash pattern=on 2.5pt off 1.6pt,
        preaction={
            draw=white,
            line width=1.40pt
        }
    },
    lab/.style={font=\small},
    matcell/.style={font=\small},
    every node/.style={black}
]

\foreach \name/\x/\y/\label in {
    xone/0/0/{$L^*(x_1)$},
    xtwo/9.2/0/{$L^*(x_2)$}
} {
    \node[fiber] (\name-fiber) at (\x,\y) {};
    \node[lab, anchor=south, yshift=-1.5pt]
        at (\name-fiber.north) {\label};
    \foreach \g/\dy in {0/0.52,1/0,2/-0.52} {
        \node[vtx] (\name\g) at (\x,\y+\dy) {$\g$};
    }
}

\foreach \name/\y/\label in {
    yone/3.42/{$L^*(y_1)$},
    ytwo/1.14/{$L^*(y_2)$},
    ythree/-1.14/{$L^*(y_3)$},
    yfour/-3.42/{$L^*(y_4)$}
} {
    \node[fiber] (\name-fiber) at (4.6,\y) {};
    \node[lab, anchor=south, yshift=-1.5pt]
        at (\name-fiber.north) {\label};
    \foreach \g/\dy in {0/0.52,1/0,2/-0.52} {
        \node[vtx] (\name\g) at (4.6,\y+\dy) {$\g$};
    }
}

\foreach \Y in {yone,ytwo,ythree,yfour} {
    \foreach \g in {0,1,2} {
        \draw[ed] (xone\g) -- (\Y\g);
    }
}

\foreach \g in {0,1,2} {
    \draw[ed] (yone\g) -- (xtwo\g);
}

\draw[ed] (ytwo0) -- (xtwo2);
\draw[ed] (ytwo1) -- (xtwo0);
\draw[ed] (ytwo2) -- (xtwo1);

\draw[ed] (ythree0) -- (xtwo1);
\draw[ed] (ythree1) -- (xtwo2);
\draw[ed] (ythree2) -- (xtwo0);

\foreach \g in {0,1,2} {
    \draw[ed] (yfour\g) -- (xtwo\g);
}

\draw[bolded] (xone0) -- (yone0);
\draw[bolded] (yone0) -- (xtwo0);
\draw[bolded] (xone0) -- (yfour0);
\draw[bolded] (yfour0) -- (xtwo0);

\draw[dasheded] (xone1) -- (ythree1);
\draw[dasheded] (ythree1) -- (xtwo2);

\foreach \name/\x/\y in {
    xone/0/0,
    xtwo/9.2/0,
    yone/4.6/3.42,
    ytwo/4.6/1.14,
    ythree/4.6/-1.14,
    yfour/4.6/-3.42
} {
    \foreach \g/\dy in {0/0.52,1/0,2/-0.52} {
        \node[vtx] at (\x,\y+\dy) {$\g$};
    }
}

\begin{scope}[shift={(11.5,-1.08)}]
    \draw[line width=0.4pt] (0,0) rectangle (3.6,2.16);

    \foreach \x in {1.2,2.4} {
        \draw[line width=0.4pt] (\x,0) -- (\x,2.16);
    }

    \foreach \y in {0.72,1.44} {
        \draw[line width=0.4pt] (0,\y) -- (3.6,\y);
    }

    \draw[tablebold] (0,1.44) rectangle (1.2,2.16);
    \draw[tabledash] (2.4,0.72) rectangle (3.6,1.44);

    \node at (0.6,2.33) {$\beta=0$};
    \node at (1.8,2.33) {$\beta=1$};
    \node at (3.0,2.33) {$\beta=2$};

    \node[anchor=east] at (-0.15,1.80) {$\alpha=0$};
    \node[anchor=east] at (-0.15,1.08) {$\alpha=1$};
    \node[anchor=east] at (-0.15,0.36) {$\alpha=2$};

    \node[matcell] at (0.6,1.80) {$2$};
    \node[matcell] at (1.8,1.80) {$1$};
    \node[matcell] at (3.0,1.80) {$1$};

    \node[matcell] at (0.6,1.08) {$1$};
    \node[matcell] at (1.8,1.08) {$2$};
    \node[matcell] at (3.0,1.08) {$1$};

    \node[matcell] at (0.6,0.36) {$1$};
    \node[matcell] at (1.8,0.36) {$1$};
    \node[matcell] at (3.0,0.36) {$2$};
\end{scope}
\end{tikzpicture}%
}
\caption{\small
The left part of the figure depicts the $3$-fold cover $\mathcal{H}^*$ of $K=K_{2,4}$ used in the proof of Theorem~\ref{thm:k2n}. For each $v\in V(K)$, a gray oval containing three vertices represents $L^*(v)=\{(v,\gamma):\gamma\in\mathbb{Z}_3\}$.  The vertices in each list are labeled by their second coordinates.  The bold solid paths illustrate that $1,4\in B^*_{(0,0)}$, while the bold dashed path illustrates that $3\in B^*_{(1,2)}$. The right table records the values $b^*_{(\alpha,\beta)}$, with rows indexed by $\alpha$ and columns indexed by $\beta$.
The solid box marks $b^*_{(0,0)}$, and the dashed box marks $b^*_{(1,2)}$.
Consequently,
$P_{DP}(K,\mathcal{H}^*)
=\sum_{(\alpha,\beta)\in\mathbb{Z}_3^2}
(3-1)^{b^*_{(\alpha,\beta)}}
(3-2)^{4-b^*_{(\alpha,\beta)}}
=3\cdot2^2+6\cdot2=24$.
}
\label{fig:K24}
\end{figure}
\begin{proof}
    Let $K=K_{2,n}$ with bipartition $X=\{x_h:h\in[2]\}$ and $Y=\{y_t:t\in[n]\}$. If $q=1$, then $P_{DP}(K,1)=0$ which is the same as the stated formula. 
    
    Now suppose  $q=2$. When $n=1$, $K$ is a path on three vertices which means $P_{DP}(K,2)=2$.  The stated formula gives $2$ since $p=0$ and $r=2$. When $n\geq 2$, $K$ contains a $4$-cycle, so $\chi_{DP}(K) \geq 3$ and  $P_{DP}(K,2)=0$.  The stated formula also gives $0$.

    Thus we may assume $q\geq 3$. Set $\sigma=\sum_{j=1}^{q^2}(q-1)^{a_j}(q-2)^{n-a_j}$ as defined in the statement of the theorem. We first prove that $P_{DP}(K,q)\geq\sigma$.  Let $\mathcal{H} = (L,H)$ be a full $q$-fold cover of $K$ such that $P_{DP}(K,\mathcal{H}) = P_{DP}(K,q)$. For each $v \in V(K)$, write $L(v) = \{(v,\gamma) : \gamma \in [q]\}$.  Since $\mathcal{H}$ is full and $x_hy_t\in E(K)$ for each $h\in[2]$ and $t\in[n]$, $E_H(L(x_h), L(y_t))$ is a perfect matching. For each $h\in[2]$, $t\in[n]$, define the map $\pi_{h,t}: [q] \to [q]$ by letting $\pi_{h,t}(\gamma)$ be the unique element of $[q]$ such that $(x_h,\gamma)(y_t,\pi_{h,t}(\gamma))\in E(H)$.  Since $E_H(L(x_h),L(y_t))$ is a perfect matching, $\pi_{h,t}$ is a bijection and hence a permutation of $[q]$.  
    
    Let $\mathcal{I}_{\mathcal{H}}$ be the set of proper $\mathcal{H}$-colorings of $K$. Then $P_{DP}(K,\mathcal{H})=|\mathcal{I}_{\mathcal{H}}|$. For each $(\alpha, \beta)\in [q]^2$, let 
    $\mathcal{I}_\mathcal{H}(\alpha, \beta) =  \{I\in \mathcal{I}_{\mathcal{H}}: (x_1, \alpha), (x_2, \beta) \in I \}.
    $
    Note that since $q \geq 3$, $\mathcal{I}_{\mathcal{H}}(\alpha, \beta) \neq \emptyset$ for all $(\alpha, \beta) \in [q]^2$.
    Since every $I \in \mathcal{I}_{\mathcal{H}}$ is a transversal, each such $I$ contains exactly one element of $L(x_1)$ and exactly one element of $L(x_2)$. Hence $\{\mathcal{I}_{\mathcal{H}}(\alpha, \beta): (\alpha, \beta) \in [q]^2\}$ is a partition of $\mathcal{I}_{\mathcal{H}}$, and  $P_{DP}(K,\mathcal{H}) = \sum_{(\alpha,\beta)\in[q]^2} |\mathcal{I}_{\mathcal{H}}(\alpha,\beta)|$.  We now compute $|\mathcal{I}_{\mathcal{H}}(\alpha,\beta)|$ for each $(\alpha,\beta)\in[q]^2$.

    For each $(\alpha,\beta)\in[q]^2$ and $t \in [n]$, let  $ D_t(\alpha,\beta) = \{\delta\in[q]: (x_1,\alpha)(y_t,\delta), (x_2,\beta)(y_t,\delta)\notin E(H)\} $.  Note $|D_t(\alpha,\beta)|=q-1$ if $\pi_{1,t}(\alpha)=\pi_{2,t}(\beta)$ and $|D_t(\alpha,\beta)|=q-2$ otherwise.  Clearly for all $(\alpha, \beta) \in [q]^2$, $|\mathcal{I}_{\mathcal{H}}(\alpha,\beta)|=\prod_{t=1}^{n}|D_t(\alpha,\beta)|$.

    Let $B_{(\alpha, \beta)}=\{t\in[n]:\pi_{1,t}(\alpha)=\pi_{2,t}(\beta)\}$ and $b_{(\alpha,\beta)} = |B_{(\alpha,\beta)}|$ for all $(\alpha, \beta) \in [q]^2$.  Since $|D_t(\alpha,\beta)|=q-1$ precisely when $\pi_{1,t}(\alpha)=\pi_{2,t}(\beta)$, we have $|D_t(\alpha,\beta)|=q-1$ for exactly $b_{(\alpha, \beta)}$ values of $t\in[n]$ and $|D_t(\alpha,\beta)|=q-2$ for the remaining $n-b_{(\alpha, \beta)}$ values of $t$. Hence $|\mathcal{I}_{\mathcal{H}}(\alpha,\beta)|=(q-1)^{b_{(\alpha,\beta)}}(q-2)^{n-b_{(\alpha, \beta)}}$, and it follows that 
    $
        P_{DP}(K,q)
        =
        \sum_{(\alpha,\beta)\in[q]^2}
        (q-1)^{b_{(\alpha,\beta)}}(q-2)^{n-b_{(\alpha,\beta)}}.
    $

    Finally, we claim that  $\sum_{(\alpha,\beta)\in[q]^2} b_{(\alpha,\beta)}=qn$. To see this, fix $t_0\in [n]$. For each $\alpha \in [q]$, there is one unique $\beta \in [q]$ such that $\pi_{1,t_0}(\alpha) = \pi_{2,t_0}(\beta)$ since  $\pi_{2,t_0}$ is a permutation of $[q]$.  Thus for each $t \in [n]$ there exist exactly $q$ ordered pairs $(\alpha,\beta)$ in $[q]^2$ with the property that $t \in B_{(\alpha,\beta)}$.  So,  $\sum_{(\alpha,\beta)\in[q]^2} b_{(\alpha,\beta)} =qn$.

   The $q^2$ integers $b_{(\alpha,\beta)}$, indexed by $(\alpha,\beta)\in[q]^2$, lie in $[0:n]$ and have sum $qn$. Likewise, the integers $a_i$, indexed by $i\in[q^2]$, lie in $[0:n]$ and satisfy $\sum_{i=1}^{q^2}a_i=q^2p+r=qn$. Moreover, $|a_i-a_j|\leq1$ for all $i,j\in[q^2]$. Therefore, Lemma~\ref{lem:min}, applied with $k=q^2$ and $s=qn$, gives
    \[
        P_{DP}(K,q)
        =
        \sum_{(\alpha,\beta)\in[q]^2}
        (q-1)^{b_{(\alpha,\beta)}}(q-2)^{n-b_{(\alpha,\beta)}}
        \geq
        \sigma.
    \]

    It remains to show that $P_{DP}(K,q)\leq\sigma$. It is sufficient to construct a $q$-fold cover $\mathcal{H}^{*}=(L^{*},H^{*})$ of $K$ such that $P_{DP}(K,\mathcal{H}^{*})=\sigma$. For each $v\in V(K)$, let $L^{*}(v)=\{(v,\gamma):\gamma\in \Z_q\}$.
    In this construction, all arithmetic is done in $\Z_q$, with the exception of calculating $P_{DP}(K, \mathcal{H}^*)$. For each $t \in [n]$, let $\pi^*_{1,t}, \pi^*_{2,t}: \Z_q \to \Z_q$ be the permutations defined by $\pi^*_{1,t}(\gamma) = \gamma$ and $\pi^*_{2,t}(\gamma) = \gamma + t-1$ . For each $h \in [2]$, $t \in [n]$, and $\gamma \in \Z_q$, add the edge $(x_h,\gamma)(y_t,\pi^*_{h,t}(\gamma))$ to $E(H^*)$. Figure~\ref{fig:K24} depicts the cover $\mathcal{H}^*$ of $K$ in the case $q=3$ and $n=4$.  
    
    We now show that $P_{DP}(K,\mathcal H^*)=\sigma$. For each $(\alpha,\beta)\in\Z_q^2$, let $B_{(\alpha, \beta)}^* = \{t\in[n]:\pi^*_{1,t}(\alpha)=\pi^*_{2,t}(\beta)\}$. Note the elements of $B_{(\alpha, \beta)}^*$ are integers and not elements of $\mathbb{Z}_q$. See Figure~\ref{fig:K24} for an illustration of these sets.  Write $b^*_{(\alpha,\beta)} =|B_{(\alpha, \beta)}^*|$. The counting argument above applies to $\mathcal H^*$ with $\Z_q$ in place of $[q]$; therefore $P_{DP}(K,\mathcal H^*)=\sum_{(\alpha,\beta)\in\Z_q^2}(q-1)^{b^*_{(\alpha,\beta)}}(q-2)^{n-b^*_{(\alpha,\beta)}}$.

    It remains to show that $\sum_{(\alpha,\beta)\in\Z_q^2}(q-1)^{b^*_{(\alpha,\beta)}}(q-2)^{n-b^*_{(\alpha,\beta)}} = \sigma$. To do this we determine  $b^*_{(\alpha, \beta)}$ for each $(\alpha, \beta) \in \Z_q^2$.  Since $\pi^*_{1,t}(\alpha)=\alpha$ and $\pi^*_{2,t}(\beta)=\beta+t-1$, $b^*_{(\alpha,\beta)}$ is determined by $\alpha-\beta$ and $n$.  For each $d \in \Z_q$, set $S_d = \{(\alpha, \beta) \in \Z_q^2 : \alpha - \beta = d\}$.  Then $\{S_0,\ldots,S_{q-1}\}$ is a partition of $\Z_q^2$.  Moreover, for each fixed $d\in[0:q-1]$, the map $\beta \mapsto (\beta+ d,\beta)$ is a bijection from $\Z_q$ to $S_d$; so $|S_d|=q$.

    Now fix $d\in \Z_q$ and suppose $(\alpha,\beta)\in S_d$. Then $\alpha-\beta= d$. Hence, for each $t\in[n]$, the condition $\alpha=\beta+t-1$ is equivalent to $t-1 = d$. Thus $B^*_{(\alpha, \beta)} = \{ t \in [n]: t-1 = d\}$.  
    The number of integers $t \in [n]$ with the property that $t-1$ is congruent to $d$ mod $q$ is $p+1$ when $d < (n \mod q)$, otherwise it is
     $p$ when $d \geq (n \mod q)$. So, if $s= n \mod q$, then
    \[
        b^*_{(\alpha, \beta)}=
        \begin{cases}
             p+1 & \text{if } d \in [0:s-1]\\
            p & \text{if } d \in [s: q-1]
           
        \end{cases}.
    \]
    The table in Figure~\ref{fig:K24} organizes the values of $ b^*_{(\alpha, \beta)}$ for the case $q=3$ and $n=4$.

    For the remainder of the proof we use standard arithmetic. Notice $sq =r$. Since $|S_d|=q$ for every $d\in[0:q-1]$, the number of ordered pairs $(\alpha,\beta)\in\Z_q^2$ satisfying $b^*_{(\alpha,\beta)}=p+1$ is $\sum_{d=0}^{s-1}|S_d|=sq$. Similarly, the number of ordered pairs $(\alpha,\beta)\in\Z_q^2$ satisfying $b^*_{(\alpha,\beta)}=p$ is $\sum_{d=s}^{q-1}|S_d|=(q-s)q=q^2-sq$.  Thus the $q^2$ values $b^*_{(\alpha,\beta)}$, indexed by $(\alpha,\beta)\in\Z_q^2$, consist of exactly $r$ copies of $p+1$ and $q^2-r$ copies of $p$. Therefore
    \[
        P_{DP}(K,\mathcal H^*)
        =
        \sum_{(\alpha,\beta)\in\Z_q^2}
        (q-1)^{b^*_{(\alpha,\beta)}}
        (q-2)^{n-b^*_{(\alpha,\beta)}}
        =
        \sum_{j=1}^{q^2}
        (q-1)^{a_j}(q-2)^{n-a_j}
        =
        \sigma.
    \] 
\end{proof} 
 One can note from our result that $P_{DP}(K_{2,n},q)$ is identical to the formula in Theorem~\ref{thm: bipartite} whenever $q \geq n$.  Moreover, when $q \geq n$, the cover $\mathcal{H}^*$ constructed in the proof of Theorem~\ref{thm:k2n} is cycle-shattering.

\section{Graphs with Girth that is Even}\label{girth}
In this section we prove Theorem~\ref{thm:evencycles}. We first recall the following fact about chromatic polynomials. 
\begin{pro}[\cite{DKT05}] \label{prop: whitney}
     Let $G$ be an $n$-vertex graph with $m$ edges and girth $\ell$ and write \\ $P(G,q) = \sum_{i=0}^{n} (-1)^ia_{i}q^{n-i}$.  Then $a_i = \binom{m}{i}$ for $i \in [0:\ell-2]$, and  $a_{\ell-1} = \binom{m}{\ell-1} - t$ where  $t$ is the number of $\ell$-cycles in $G$.
\end{pro}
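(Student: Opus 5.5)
We would derive the statement from the subgraph (Whitney) expansion of the chromatic polynomial, which the paper already gives us in DP form. Take $\mathcal{H}$ to be a canonical $q$-fold cover of $G$. Then $P_{DP}(G,\mathcal{H})=P(G,q)$, via the bijection with proper $q$-colorings noted in Section~\ref{basic}. For every $A\subseteq E(G)$, each component of $\angles{\mathcal{H}}{A}[2]$ inherits the canonical labeling, so Lemma~\ref{lem:toolbox}(\ref{tool:canon},\ref{tool:product}) gives $|\mathcal{F}_{\angles{\mathcal{H}}{A}}|=q^{\operatorname{comp}(\angles{G}{A})}$. Lemma~\ref{lem: PIE} then yields
\[
P(G,q)=\sum_{A\subseteq E(G)}(-1)^{|A|}q^{\operatorname{comp}(\angles{G}{A})}.
\]
Hence $(-1)^i a_i$ equals the sum of $(-1)^{|A|}$ over all $A$ with $\operatorname{comp}(\angles{G}{A})=n-i$, i.e. with rank $r(A):=n-\operatorname{comp}(\angles{G}{A})=i$.

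The key structural fact is the following. If $\angles{G}{A}$ contains a cycle, that cycle has length at least $\ell$, so its component has at least $\ell$ vertices. Therefore $r(A)\geq \ell-1$, with equality only if that component has exactly $\ell$ vertices and every other component is an isolated vertex. On the other hand, if $A$ is acyclic then $r(A)=|A|$. Moreover, any edge set with fewer than $\ell$ edges is acyclic.

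For $i\le \ell-2$, every $A$ with $r(A)=i$ must be acyclic, so $|A|=i$. Conversely, every $i$-subset of $E(G)$ is acyclic, since $i<\ell$, and so has rank $i$. Each such set contributes $(-1)^i$, so $a_i=\binom{m}{i}$.

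For $i=\ell-1$ we split the sets with $r(A)=\ell-1$ into two kinds.
\begin{enumerate}[(a)]
\item The acyclic ones are exactly the $(\ell-1)$-subsets of $E(G)$, all of which are acyclic. They contribute $(-1)^{\ell-1}\binom{m}{\ell-1}$.
\item The cyclic ones have a single nontrivial component $W$ with $|V(W)|=\ell$ containing an $\ell$-cycle $C$. Any further edge of $W$ would be a chord of $C$ and would create a cycle shorter than $\ell$. Hence $A=E(C)$ and $|A|=\ell$. Conversely, every $\ell$-cycle gives such an $A$, so these sets contribute $t(-1)^{\ell}=-(-1)^{\ell-1}t$.
\end{enumerate}
Adding the two contributions gives $a_{\ell-1}=\binom{m}{\ell-1}-t$.

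The only delicate step is the characterization of the cyclic sets of rank exactly $\ell-1$, namely the chord argument in (b). It mirrors the $\Delta S_{2k-1}$ analysis in the proof of Theorem~\ref{thm: bipartite}. If $G$ is acyclic (so $\ell=\infty$), the acyclic-case argument alone gives $a_i=\binom{m}{i}$ for all $i$.
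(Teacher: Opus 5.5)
Your proof is correct. The paper gives no argument of its own for this proposition; it simply quotes it from \cite{DKT05}, where it is classically derived from Whitney's theorems. One version uses the broken-cycle form, in which $a_i$ counts the $i$-subsets of $E(G)$ containing no broken cycle: every broken cycle has at least $\ell-1$ edges, and the $(\ell-1)$-edge broken cycles are in bijection with the $\ell$-cycles.

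Your argument is the subgraph-expansion version of this classical proof, with one difference. You do not quote the expansion $P(G,q)=\sum_{A\subseteq E(G)}(-1)^{|A|}q^{\operatorname{comp}(\angles{G}{A})}$; instead you rederive it inside the paper's framework. You apply Lemma~\ref{lem: PIE} to a canonical cover and use Lemma~\ref{lem:toolbox}(\ref{tool:canon},\ref{tool:product}) to get $|\mathcal{F}_{\angles{\mathcal{H}}{A}}|=q^{\operatorname{comp}(\angles{G}{A})}$ for every $A$. Your rank and chord analysis afterwards is essentially the one the paper uses for $\Delta S_{2k-1}$ in the proof of Theorem~\ref{thm: bipartite}.

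What your route buys is self-containment and a transparent link to Theorem~\ref{thm:evencycles}. Comparing the canonical cover with a cycle-shattering cover $\mathcal{H}_S$ term by term shows that $P(G,q)-P_{DP}(G,\mathcal{H}_S)$ is exactly the sum of $(-1)^{|A|}q^{\operatorname{comp}(\angles{G}{A})}$ over those $A$ for which $\angles{G}{A}$ contains a cycle. Your case (b) identifies the leading term of that sum as $(-1)^{\ell}t\,q^{n-\ell+1}$, which for $\ell=g+1$ even is the $tq^{n-g}$ in that theorem. The broken-cycle route instead gives a combinatorial meaning, and hence nonnegativity, for every coefficient $a_i$, not just the first $\ell$ of them.

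Two small points are left implicit, both harmless. First, the expansion holds for every $q\in\N$, so the two polynomials coincide and coefficients may be compared. Second, distinct $\ell$-cycles have distinct edge sets, which is why case (b) contributes exactly $t$ sets.
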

We now prove Theorem~\ref{thm:evencycles}, which we restate.
\begin{customthm} {\bf \ref{thm:evencycles}}
     Let $g \geq 3$ be odd. Suppose $G$ is an $n$-vertex graph with $m$ edges and girth $g+1$, and let $t$ be the number of $(g+1)$-cycles in $G$. Then 
     \[
        P(G,q) - P_{DP}(G,q) = t q^{n-g} + O(q^{n-g-1})
     \]
     as $q \to \infty$.  
\end{customthm}
\begin{proof}
    By Proposition~\ref{prop: whitney}, there exist fixed integers $c_{g+1},\dots,c_n$ such that
    \[
        P(G,q)
        =
        \sum_{i=0}^{g-1}(-1)^i\binom{m}{i}q^{n-i} 
        + (-1)^g\left(\binom{m}{g} 
        - t\right)q^{n-g}
        + \sum_{i=g+1}^{n}(-1)^i c_iq^{n-i}.
    \]
    Set $R(q) = \sum_{i=g+1}^{n}(-1)^i c_iq^{n-i}$. Then $R(q) = O(q^{n-g-1})$. Also, 
    \[
        P(G,q)
        =
        \sum_{i=0}^{g}(-1)^i\binom{m}{i}q^{n-i}
        +
        tq^{n-g}
        + R(q).
    \]
    We first prove  $P(G,q) - P_{DP}(G,q) \leq   tq^{n-g}+O(q^{n-g-1})$ by showing  $P_{DP}(G,q) \geq  \sum_{i=0}^{g}(-1)^i\binom{m}{i}q^{n-i}
    -2^m q^{n-g-1}$ for every $q \in \N$. Fix $q\in\mathbb N$ and let $\mathcal{H}^*=(L^*,H^*)$ be a full $q$-fold cover of $G$ such that $P_{DP}(G,\mathcal{H}^*)=P_{DP}(G,q)$. For any $q$-fold cover $\mathcal{H}$ of $G$ and  $r \in [0: m]$, let \\ $S_r(\mathcal{H}) = (-1)^r\sum_{A\subseteq E(G), |A|=r}|\mathcal{F}_{\angles{\mathcal{H}}{A}}|$. By Lemma~\ref{lem: PIE},
    \begin{equation}~\label{eqn:PIE}
                P_{DP}(G,\mathcal{H}^*)= \sum_{r=0}^{m}S_r(\mathcal{H}^*)= \sum_{r=0}^{g}S_r(\mathcal{H}^*)+S_{g+1}(\mathcal{H}^*)+\sum_{r=g+2}^{m}S_r(\mathcal{H}^*).
    \end{equation}
    We proceed by finding $\sum_{r=0}^{g}S_r(\mathcal{H}^*)$ exactly and bound both $S_{g+1}(\mathcal{H}^*)$ and $\sum_{r=g+2}^{m}S_r(\mathcal{H}^*)$ from below.

    First consider $\sum_{r=0}^{g}S_r(\mathcal{H}^*)$. Fix $r\in[0:g]$. Since $G$ has girth $g+1$, for every $A \subseteq E(G)$ such that $|A|=r$, $\angles{G}{A}$ is acyclic. Hence $\operatorname{comp}(\angles{G}{A})=n-r$ and Lemmas~\ref{lem:toolbox}(\ref{tool:canon},\ref{tool:product}) and~\ref{lem: acyclic} give
    $
       |\mathcal{F}_{\angles{\mathcal{H}^*}{A}}|=q^{\operatorname{comp}(\angles{G}{A})} = q^{n-r} 
    $.
    Therefore, $S_r(\mathcal{H}^*) = (-1)^r\binom{m}{r}q^{n-r}$ for every $r \in [0:g]$.

    Now consider $S_{g+1}(\mathcal{H}^*)$. Since $g$ is odd,  $S_{g+1}(\mathcal{H}^*)= \sum_{A\subseteq E(G), |A|=g+1}(-1)^{g+1}|\mathcal{F}_{\angles{\mathcal{H}^*}{A}}|\geq 0$.

    It remains to bound  $\sum_{r=g+2}^{m}S_r(\mathcal{H}^*)$. If $m \leq g+1$, then $\sum_{r=g+2}^{m}S_r(\mathcal{H}^*) =0$, so suppose $m \geq g+2$.  Fix $r\in[g+2:m]$ and suppose $A \subseteq E(G)$ satisfies $|A| =r$. By Lemma~\ref{lem:toolbox}(\ref{tool:canon},\ref{tool:product}), $|\mathcal{F}_{\angles{\mathcal{H}^*}{A}}| \leq q^{\operatorname{comp}(\angles{G}{A})}$. Thus it suffices to show that $\operatorname{comp}(\angles{G}{A}) \leq n-g-1$.  Write $A = \{e_1, \dots, e_r\}$, and, for each $i \in [r]$, let $A_i = \{e_1, \dots, e_i\}$. For $1\leq k\leq j\leq r$ we have $A_k \subseteq A_j$, so $\operatorname{comp}(\angles{G}{A_j}) \leq \operatorname{comp}(\angles{G}{A_k})$.  
    
    Since $G$ has girth $g+1$, $\angles{G}{A_i}$ is acyclic for every $i \in [g]$. We now consider $\angles{G}{A_{g+1}}$. If $\angles{G}{A_{g+1}}$ is acyclic, then $\operatorname{comp}(\angles{G}{A_{g+1}})=n-(g+1)=n-g-1$ and $\operatorname{comp}(\angles{G}{A}) \leq n-g-1$ follows.  
    
    Suppose instead $\angles{G}{A_{g+1}}$ contains a cycle. Then there exists a $(g+1)$-cycle $C$ in $G$ such that $A_{g+1} = E(C)$, so $\operatorname{comp}(\angles{G}{A_{g+1}})=n-(g+1)+1=n-g$. Also, since $e_{g+2} \notin E(C)$, its endpoints cannot both lie in $V(C)$, as this would imply $e_{g+2}$ is a chord of $C$, creating a cycle of length less than $g+1$.  Hence $e_{g+2}$ joins two distinct components of $\angles{G}{A_{g+1}}$, so $\operatorname{comp}(\angles{G}{A_{g+2}})=n-g-1$ and $\operatorname{comp}(\angles{G}{A}) \leq n-g-1$ follows.

    Thus, for every $A\subseteq E(G)$ with $|A|=r$, $|\mathcal{F}_{\angles{\mathcal{H}^*}{A}}| \leq q^{n-g-1}$ and  $S_r(\mathcal{H}^*)=(-1)^r\sum_{A\subseteq E(G),|A|=r}|\mathcal{F}_{\angles{\mathcal{H}^*}{A}}|\geq -\binom{m}{r} q^{n-g-1}$ for every $r\in[g+2:m]$.

    Consequently, using (\ref{eqn:PIE}), we have
    \begin{align*}
        P_{DP}(G, q) &\geq \sum_{r=0}^{g}(-1)^r\binom{m}{r}q^{n-r} + 0 -\sum_{r=g+2}^{m}\binom{m}{r} q^{n-g-1} \\
        &\geq \sum_{r=0}^{g}(-1)^r\binom{m}{r}q^{n-r} -2^m q^{n-g-1}.
    \end{align*}
    Thus $P(G,q) - P_{DP}(G,q) \leq   tq^{n-g}+ R(q) + 2^m q^{n-g-1}$ for every $q \in \N$. Since $R(q)+2^m q^{n-g-1} = O(q^{n-g-1})$ as $q \to \infty$, there is a positive constant $C_1$ such that $P(G,q)-P_{DP}(G,q)
    \le tq^{n-g}+C_1q^{n-g-1}$ provided $q$ is sufficiently large.

    We now prove $P(G,q) - P_{DP}(G,q) \geq   tq^{n-g}+O(q^{n-g-1})$ by showing that $P_{DP}(G,q) \leq \sum_{i=0}^{g}(-1)^i\binom{m}{i}q^{n-i}+ \left(\binom{m}{g+1}-t\right)q^{n-g-1} +2^m q^{n-g-2}$  for every $q \geq 2^m$. Suppose $q \geq 2^m$. By Lemma~\ref{lem: shattering_existence}, there exists a full $q$-fold cover $\mathcal{H}_S = (L_S, H_S)$ of $G$ such that for all $A \subseteq E(G)$, if $\angles{G}{A}$ contains a cycle then $|\mathcal{F}_{\angles{\mathcal{H}_S}{A}}| =0$. By Lemma~\ref{lem: PIE} and the definition of $S_r$ as above, 
    \begin{equation}~\label{eqn:PIE s}
                P_{DP}(G,\mathcal{H}_S)= \sum_{r=0}^{m}S_r(\mathcal{H}_S)= \sum_{r=0}^{g}S_r(\mathcal{H}_S)+S_{g+1}(\mathcal{H}_S)+\sum_{r=g+2}^{m}S_r(\mathcal{H}_S).
    \end{equation}
    We proceed by finding $\sum_{r=0}^{g}S_r(\mathcal{H}_S)$ and $S_{g+1}(\mathcal{H}_S)$ exactly and bound  $\sum_{r=g+2}^{m}S_r(\mathcal{H}_S)$ from above.

    First consider $\sum_{r=0}^gS_r(\mathcal{H}_S)$. As before, for every $r\in[0:g]$ and any $A\subseteq E(G)$ with $|A|=r$, $\angles{G}{A}$ is acyclic. Hence $S_r(\mathcal{H}_S) =(-1)^r\binom mr q^{n-r}$ for every $r\in[0:g]$.

     Now consider $S_{g+1}(\mathcal H_S)$. Since $G$ has girth $g+1$, for any $A\subseteq E(G)$ with $|A|=g+1$,
    $\angles{G}{A}$ contains a cycle if and only if $A$ is the  edge set of a $(g+1)$-cycle. Thus exactly $t$ such sets $A$ are the edge sets of $(g+1)$-cycles , while the remaining $\binom{m}{g+1}-t$ sets $A$ have $\angles{G}{A}$ acyclic.  If $\angles{G}{A}$ contains a cycle, then $|\mathcal{F}_{\angles{\mathcal{H}_S}{A}}|=0$ by Lemma~\ref{lem: shattering_existence}.  Otherwise, $\angles{G}{A}$ is acyclic, so $\operatorname{comp}(\angles{G}{A})=n-g-1$, and Lemmas~\ref{lem:toolbox}(\ref{tool:canon},\ref{tool:product}) and~\ref{lem: acyclic} give $|\mathcal{F}_{\angles{\mathcal{H}_S}{A}}|=q^{n-g-1}$.  It follows that
    \[
        S_{g+1}(\mathcal H_S)
        =
        \left(\binom{m}{g+1}-t\right)q^{n-g-1}.
    \]

    It remains to bound  $\sum_{r=g+2}^{m}S_r(\mathcal{H}_S)$ from above. To do this, we will bound $|S_r(\mathcal H_S)|$ from above for every $r\in[g+2:m]$.  Fix $r\in[g+2:m]$.  For every $A\subseteq E(G)$ with $|A|=r$, if $\angles{G}{A}$ contains a cycle, then $|\mathcal{F}_{\angles{\mathcal{H}_S}{A}}|=0$ by Lemma~\ref{lem: shattering_existence}. Otherwise,  $\angles{G}{A}$ is acyclic, so $\operatorname{comp}(\angles{G}{A})=n-r$, and Lemmas~\ref{lem:toolbox}(\ref{tool:canon},\ref{tool:product}) and~\ref{lem: acyclic} give $|\mathcal{F}_{\angles{\mathcal{H}_S}{A}}| =q^{n-r}$. Therefore, 
    \begin{align*}
        |S_r(\mathcal H_S)|
        &=
        \sum_{\substack{A\subseteq E(G)\\ |A|=r\\
                        \angles{G}{A}\text{ is acyclic}}}
        q^{n-r}
        \leq
        \binom{m}{r}q^{n-r}
        \leq
        \binom{m}{r}q^{n-g-2}.
    \end{align*}
    Thus, summing over $r\in[g+2:m]$ gives $\sum_{r=g+2}^{m}S_r(\mathcal H_S) \leq \sum_{r=g+2}^{m}|S_r(\mathcal H_S)| \leq  2^m q^{n-g-2}$.
    Consequently, using~\eqref{eqn:PIE s}, we have
    \[
    \begin{aligned}
        P_{DP}(G,\mathcal H_S)
        &\leq
        \sum_{r=0}^{g}(-1)^r\binom{m}{r}q^{n-r}+
        \left(\binom{m}{g+1}-t\right)q^{n-g-1}
        +
        2^m q^{n-g-2}.
    \end{aligned}
    \]
    Since $P_{DP}(G,q) \leq P_{DP}(G, \mathcal{H}_S)$, it follows that  $P(G,q) - P_{DP}(G,q) \geq   tq^{n-g}+ R(q) -  (\binom{m}{g+1}-t)q^{n-g-1} -  2^m q^{n-g-2} $.  Since $R(q) - (\binom{m}{g+1}-t)q^{n-g-1} -  2^m q^{n-g-2} = O(q^{n-g-1})$ as $q \to \infty$, there is a positive constant $C_2$ such that $tq^{n-g}-C_2q^{n-g-1}
    \le P(G,q)-P_{DP}(G,q)$ provided $q$ is sufficiently large.  The desired result immediately follows.
\end{proof}
 
{\bf Declaration of generative AI and AI-assisted technologies in the manuscript preparation process.} 
During the preparation of this manuscript, the authors used ChatGPT (OpenAI) between May and July 2026 for limited language editing and for assistance in creating and refining certain figures. All AI-assisted text and graphical material was reviewed, revised, and independently verified by the authors. 

\bibliographystyle{hplain}
\bibliography{bibliography}{}

\end{document}